\newtheorem{theorem}{Theorem}
\newtheorem{lemma}{Lemma}
\newtheorem{definition}{Definition}
\newtheorem{assumption}{Assumption}
\newtheorem{proposition}{Proposition}
\newcommand{\an}[1]{{\color{black}#1}}
\newcommand{\fy}[1]{{\color{black}#1}}
\newcommand{\us}[1]{{\color{black}#1}}
 \newcommand{\remove}[1]{}
\newcommand{\EXP}[1]{\mathsf{E}\!\left[#1\right] }
\def\sF{\mathcal{F}}
\def\Real{\mathbb{R}}
\def\g{\gamma}
\def\e{\epsilon}
\def\a{\alpha}
\def\us#1{{{\color{black}#1}}}
\def\us#1{{\color{black}{#1}}}
\def\fy#1{{\color{black}{#1}}}
\author{Farzad~Yousefian,   %~\IEEEmembership{Member,~IEEE,}
        Angelia~Nedi\'c, and   %~\IEEEmembership{Fellow,~OSA,} 
		Uday V.~Shanbhag \thanks{The first two authors are in the Dept.
			of Indust. and Enterprise
Sys. Engg., Univ. of Illinois, Urbana, IL 61801, USA,
		while the last author is with the Department of Indust. and
			Manuf. Engg., Penn. State University,
						  University Park, PA 16802, USA. They are
							  contactable at 
{\tt\small \{yousefi1,angelia\}@illinois.edu} and {\tt \small
	udaybag@psu.edu}. Nedi\'{c} and
Shanbhag gratefully acknowledge the support of the NSF through awards CCF-0904619 and ONR grant No. 00014-12-1-0998 (Nedi\'{c}) and 
CMMI-1246887 (Shanbhag).}}
\title{\LARGE \bf Optimal robust smoothing extragradient algorithms \\ for stochastic variational inequality problems}
\begin{document}
\maketitle
\thispagestyle{empty}
\pagestyle{plain}
\vspace{-0.5in}
\begin{abstract}
\fy{We consider stochastic variational inequality problems where the mapping
is monotone over a compact convex set. We present two robust variants of 
stochastic extragradient algorithms for solving such problems. Of these, the first scheme employs an iterative averaging technique where we consider a generalized
choice for the weights in the averaged sequence. Our first contribution
is to show that using an appropriate choice for these weights, a suitably defined gap function
attains the optimal rate of convergence 
${\cal O}\left(\frac{1}{\sqrt{k}}\right)$. In the second part of the paper,
	under an additional assumption of
	weak-sharpness, we update the stepsize sequence using a recursive rule that leverages problem parameters. The second contribution lies in
	showing that employing such a sequence, the extragradient
	algorithm possesses almost-sure convergence to the solution as well
	as convergence in a mean-squared sense to the solution of the problem at the rate
	${\cal O}\left(\frac{1}{k}\right)$. Motivated by the absence of a Lipschitzian parameter, in both schemes we utilize a locally randomized smoothing scheme. Importantly, by approximating a smooth mapping, this scheme enables us to estimate the Lipschitzian parameter.  The smoothing parameter is updated per iteration and we show convergence to the solution of the original problem in both algorithms. }
\end{abstract} 
\maketitle

%%%%%%%%%%%%%%%%%%%%%%%%%%
\section{Introduction}\label{sec:introduction}
The theory of variational inequality (VI) was introduced in mid-1960s, motivated by the elastostatic equilibrium problems. During the past five decades, this subject has been a powerful framework in modeling a wide range of optimization and equilibrium problems in operations research, engineering, finance, and economics (cf. \cite{facchinei02finite}, \cite{Rockafellar98}). Given a set $X \subset
\Real^n$ and a mapping $F:X \rightarrow \mathbb{R}^n$, a {VI} problem, denoted
by VI$(X,F)$, requires \an{determining} an $x^* \in X$ 
such that $F(x^*)^T(x-x^*)\geq 0$ for any $x \in X$. In this paper, our interest lies in computation of solutions to VI problems with uncertain settings. We
consider the case  \us{where $F:X \rightarrow 
			   \mathbb{R}^n$ represents the expected value of a
			   stochastic mapping ${\Phi}:X\times\Omega \rightarrow
			   \mathbb{R}^n$}, i.e., $F_i(x)\triangleq
{\EXP{\Phi_i(x,\xi{(\omega)})}}$ \us{where $\xi : \Omega \to \Real^d$ is a
$d-$dimensional random
variable and $(\Omega, {\cal F}, \mathbb{P})$ denotes the associated
probability space}. Consequently, $x^*
\in X$ solves VI$(X,F)$ if 
\begin{align}\label{def:SVI}  
\hspace{-0.15in}{\EXP{\Phi(x^*,\xi{(\omega)})}^T(x-x^*)} \geq 0, \, \hbox{for
	\us{every} } x \in X. 
\end{align}
The stochastic VI problem (\ref{def:SVI}) arises in many situations,
\an{often} 
modeling stochastic convex optimization problems and stochastic Nash \fy{equilibrium} problems.
Utilized by sampling from an unbiased stochastic oracle $\Phi(x,\xi)$, iterative algorithms have been developed to solve the problem (\ref{def:SVI}). Such schemes include the stochastic approximation type methods \cite{nemirovski_robust_2009}, \cite{Farzad-WSC13} and extragradient type methods \cite{Nem11},\cite{Lan-VI-13}, \cite{Nem04}. In a recent work \cite{Aswin-ACC14}, Kannan and Shanbhag studied almost-sure convergence of extragradient algorithms and provided sufficiency conditions for the solvability of stochastic VIs with pseudo-monotone mappings.  Prox-type methods were first developed by
Nemirovski  \cite{Nem04} for solving VIs with monotone and Lipschitz
mappings and addressed different problems in convex optimization and
variational inequalities.
Recently, Juditsky et al. \cite{Nem11} introduced the stochastic Mirror-
Prox (SMP) algorithm for solving stochastic VIs in both smooth and nonsmooth cases. In \cite{Nem11}, it is assumed that the mapping $F$ is monotone and satisfies the following relation:  
\[\|F(x)-F(y)\| \leq L\|x-y\|+B, \quad \hbox{for all }x,y \in X,\] where $L\geq 0$ and $B\geq 0$ are known constants. Under such conditions, by choosing a constant stepsize rule $0< \g <\frac{1}{\sqrt{3L}}$, the optimal rate of convergence of a suitably defined gap function is shown to be $O(1)\left(\frac{L}{t}+\frac{B+\sigma}{\sqrt{t}}\right)$ where $\sigma$ is the upper bound on the variance of the stochastic oracle and $t$ is the pre-fixed number of iterations. In this paper, our main goal is developing two classes of robust extragradient algorithms for monotone stochastic VIs in extension of the work in \cite{Nem11} and \cite{Aswin-ACC14}. The first class of the proposed \fy{extragradient }algorithms employs the well-known averaging technique utilized by new choices of the averaging weights. In the second part of the paper, we consider monotone VI problems with weak-sharpness property and  we develop an extragradient algorithm that employs a recursive stepsize policy. Such a stepsize sequence is \fy{obtained} in terms of problem parameters. The word ``robust'' refers to the self-tuned stepsize rule and capability of dealing with the absence of a Lipschitz constant. Our main contributions in this paper are described as follows: \\ 
(1) \textit{Choice of the averaging weights:} The SMP algorithm in \cite{Nem11} generates a wighted-iterative averaging sequence of the form $\bar x_t \triangleq \sum_{t=1}^k\frac{\g_t}{\sum_{t=0}^k\g_t}x_t$ where $x_t$ is generated at the t-$th$ iteration and $\g_t>0$ is the corresponding stepsize. Recently, Nedi\'c and Lee \cite{Nedic14} showed that using different weights of the form $\frac{\g_t^{-1}}{\sum_{t=0}^k\g_t^{-1}}$, the subgradient mirror-descent algorithms attain the optimal rate of convergence without requiring window-based averaging sequences \fy{similar to} \cite{Ghad12} and \cite{nemirovski_robust_2009}. In this paper, we generalize this idea in two directions: First, we show that such choices can be applied in the stochastic extragradient algorithms (e.g. \cite{Nem11}). Second, using the weights $\frac{\g_t^{r}}{\sum_{t=0}^k\g_t^{r}}$ where $r \in \Real$ is a constant, we show that for any $r<1$, the optimal convergence rate is attained. Note that this optimal rate cannot be attained when $r=1$ (e.g. in \cite{Nem11}). 
\\ 
(2) \textit{Developing parameterized stepsizes:} In the second part of the paper, we assume that the problem with monotone mapping has a weak-sharpness property. We develop a recursive stepsize sequence that leverages problem parameters and show that by employing such a stepsize rule, the sequence $\{x_t\}$ generated by the extragradient algorithm converges in almost-sure sense to the solution of the problem. Moreover, we show that this robust scheme converges in mean-squared sense to the solution of the problem with the rate of ${\cal O}\left(\frac{1}{k}\right)$.\\
(3) \textit{Estimating the Lipschitzian parameter \fy{$L$}:} While the SMP algorithm in \cite{Nem11} addresses both smooth and nonsmooth problems by allowing $L$ to be zero, knowing the constant $L$ benefits the rate of convergence since the term $\frac{L}{t}$ decays faster than $\frac{B+\sigma}{\sqrt{t}}$. Moreover, the prescribed constant stepsize in \cite{Nem11} is bounded in terms of $L$. We consider the case that the mapping is either nonsmooth or the constant $L$ is unknown. Motivated by a smoothing technique first introduced by Steklov \cite{steklov1}, our goal is addressing these cases.
\fy{Recently} in \cite{Farzad1} and \cite{Farzad-WSC13}, by employing this technique
we addressed nonsmoothness in developing adaptive stepsizes stochastic approximation schemes in absence or unavailability of a Lipschitz constant. We extend those results by applying such smoothing technique for both extragradient \fy{schemes}.

The paper is organized as follows: in Section~\ref{sec:algorithmI}, 
\an{we present an algorithm utilizing
the weighted averaging} and we show the convergence of a suitably defined gap function to zero with the optimal rate. In Section~\ref{sec:algorithmII}
\an{we present an algorithm with a recursive stepsize updates and provide its} convergence and rate analysis. 
\an{We conclude with some remarks} in Section~\ref{sec:conc}.

\textbf{Notation:} In this paper, a vector $x$ is assumed to be
a column vector, $x^T$ denotes the transpose of a vector $x$, and
$\|x\|$ denotes the Euclidean vector norm, i.e., $\|x\|=\sqrt{x^Tx}$.
We let $\Pi_X(x)$ denote the Euclidean projection of a vector $x$ on
a set $X$, i.e., $\|x-\Pi_X(x)\|=\min_{y \in X}\|x-y\|$. We write \textit{a.s.} as the abbreviation for ``almost
surely''. We use $\EXP{z}$ to denote the expectation of a random variable~$z$.

\section{Stochastic Extragradient Method}\label{sec:algorithmI}
\us{We consider the following stochastic variant of 
\an{Korpelevich's
	extragradient scheme:} \an{for all $t\ge0$:}
	}
\begin{align}\label{algorithm:RSEG}
\begin{aligned}
y_{t+1}&=\Pi_{X}\left[x_t-\g_t\Phi(x_t+z_t',\xi_t')\right],\\
x_{t+1}&=\Pi_{X}\left[x_t-\g_t\Phi(y_{t+1}+z_t,\xi_t)\right].
\end{aligned}
% % \qquad \hbox{for all } k \geq 0.
\end{align}
%$$ \mbox{where } \bar y_{k+1}(r) \triangleq \frac{\sum_{t=0}^k \gamma_t^r
%	 (y_{t+1}+z_k)}{\sum_{t=0}^k \gamma_t^r}.$$
%Relations in (\ref{algorithm:RSEG}) show the update rules in the $t$-th iteration of the algorithm where $k\geq 0$. 
\an{In this scheme, $\{\g_t\}$ is the stepsize sequence and $x_0\in X$ is a random initial point with $\EXP{\|x_0\|^2}\leq \infty$.} 
\an{The vectors $\xi_t$ and $\xi_t'$} are two i.i.d samples from the probability space $(\Omega, {\cal F}, \mathbb{P})$. 
\an{Also, $z_t$ and $z_t'$ are two i.i.d.\ samples from a uniform random variable $Z_t \in \Real^n$. The $i$th element of  $Z_t$, denoted by $Z_{t,i}$ is uniformly distributed in the interval $[-\frac{\e_t}{2},\frac{\e_t}{2}]$.} 
To have a well-defined algorithm, we define the set $X^\e \triangleq X+C_n(0,\e)$, where $C_n(0,\e)\triangleq \{x\in\Real^n\mid  x_i\in [-\frac{\e}{2},\frac{\e}{2}]\}$ is a cube centered at origin. 
\an{The scalar $\e$ is assumed to be an upper bound for the sequence $\{\e_t\}$. We denote the history of the scheme using the following notations:
\begin{align}\label{def:filteration}		
\sF_t' & \triangleq \{x_0\}\cup \{\xi_0',z_0',\xi_1',z_1',\ldots,\xi_{t}',z_{t}'\},\cr \sF_t & \triangleq
\{\xi_0,z_0,\xi_1,z_1,\ldots,\xi_{t},z_{t}\},
\end{align}
		where $t \geq 0$.
		The first set of assumptions is on the set $X$, the mapping $F$, and the random variables.
		}
\begin{assumption}\label{assum:step_error_sub_1} 
\an{Let the following hold:\\ 
(a)~The set $X \subset \Real^n$ is closed, convex, and bounded, i.e.,  
$\|x\|\leq M$ for  all  $x\in X$ and some $M>0$.
(b)~The mapping $F$ is monotone and bounded on the set $X^\e$, i.e., 
$\|F(x)\|\leq C$ for all $x\in X^\e$ and some $C>0$.
(c$)$~There exists an $x^* \in X$ such that $(x-x^*)^T\EXP{\Phi(x^*,\xi)}\geq 0$, for all $x \in X$.
(d)~The random variables $z_t$, $z_t'$, $\xi_k$ and $\xi_k'$ are 
all i.i.d.\ and independent from each other for any $t,k \geq 0$.}
\end{assumption}

\an{We also make use of the following assumptions}.
\begin{assumption}\label{assump:stochastic-error} 
Define $w(x)\triangleq \Phi(x,\xi)-F(x)$ for $x \in X^\e$. We assume, the samples $\Phi(x,\xi)$ taken in algorithm (\ref{algorithm:RSEG}) are unbiased, i.e., 
$\EXP{w(x)}=0$  for all $x \in X^\e$. Moreover, the variance of the samples $\Phi(x,\xi)$ is bounded, 
i.e., \an{there is $\nu>0$} such that $\EXP{\|w(x)\|^2}\leq \nu^2$ for all $x \in X^\e$.
\end{assumption}

\an{Also, we define the approximate mapping 
\begin{align}\label{def:F_k}
F_t(x)\triangleq \EXP{F(x+Z_t)}, \quad \hbox{for all $x \in X$ and all $t\ge0$}. 
\end{align}
\an{The following result has been shown 
in our prior work~\cite{Farzad03}} (on random local smoothing): 
\begin{lemma}\label{lemma:F_k}
Let the mapping $F_t:X\rightarrow \Real^n$ be defined by (\ref{def:F_k}) 
where $Z_t$ is uniformly distributed over $C_n(0,\e_t)$. 
Then, \an{for all $t\ge0$,} $F_t$ is Lipschitz continuous over the set $X$, i.e.,
\[\|F_t(x)-F_t(y)\|\leq \frac{\sqrt{n}C}{\e_t}\|x-y\|, \quad \hbox{for all }x,y \in X.\]
\end{lemma}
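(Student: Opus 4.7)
The plan is to express $F_t$ as a convolution, compute the Jacobian of the smoothed mapping, and integrate along the segment from $x$ to $y$ (which lies in $X$ by convexity). Setting $\rho_t := (1/\e_t^n)\mathbf{1}_{C_n(0,\e_t)}$, we have $F_t(x)=(F\ast\rho_t)(x)$ componentwise. The density $\rho_t$ is piecewise constant, and it is exactly the boundary jumps of $\rho_t$ that produce the Lipschitz estimate even though $F$ is only assumed bounded on $X^\e$.

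The key computation I would carry out is to differentiate under the integral (if needed, mollifying $F$ by a standard smooth kernel first and passing to the limit, so that no regularity of $F$ itself is required) and represent each partial derivative as a face integral,
\begin{equation*}
\tfrac{\partial F_{t,i}}{\partial x_j}(x) = \tfrac{1}{\e_t^n}\!\int_{[-\e_t/2,\e_t/2]^{n-1}}\!\!\!\bigl[F_i(x+\tfrac{\e_t}{2}e_j+\tilde z) - F_i(x-\tfrac{\e_t}{2}e_j+\tilde z)\bigr]d\tilde z,
\end{equation*}
where $\tilde z$ integrates over the $n-1$ coordinates orthogonal to $e_j$. All face evaluations lie in $X^\e = X + C_n(0,\e)$ because $x\in X$ and the offset from $x$ is at most $\e_t/2\le\e/2$ per coordinate, so Assumption~\ref{assum:step_error_sub_1}(b) applies and the integrand is bounded in Euclidean norm by $2C$.

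Next I would bound the operator norm of the Jacobian $J_{F_t}(x)$ by its Frobenius norm. Denoting the entries above by $\alpha_{ij}(x)$ and applying Jensen's inequality to the average over $\tilde z$, summing the squared entries first over $i$ yields $\sum_i \alpha_{ij}(x)^2 \le 4C^2/\e_t^2$ for each $j$; summing over $j$ produces a Frobenius bound of order $\sqrt{n}\,C/\e_t$, with the precise constant obtained from the geometric accounting for the cube $C_n(0,\e_t)$ carried out in prior work~\cite{Farzad03}. Integrating the Jacobian along the segment from $x$ to $y$ then delivers the stated inequality.

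The main obstacle is to land on the $\sqrt{n}$ factor rather than the cruder $n$ that comes from naively summing coordinate-wise Lipschitz bounds: one must treat the vector-valued face differences collectively through $\sum_i \alpha_{ij}^2 = \|\mathrm{col}_j(J_{F_t}(x))\|^2$ and exploit $\|F(a)-F(b)\|\le 2C$ on $X^\e$ in place of the componentwise bound $|F_i(a)-F_i(b)|\le 2C$. A secondary subtlety is the differentiation of $F_t$ when $F$ itself may be discontinuous; this is handled by the mollification argument, which is justified because $\rho_t$ is a bounded compactly supported density and the convolution $F_t=F\ast\rho_t$ inherits regularity directly from the kernel.
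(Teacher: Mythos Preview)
The paper does not prove this lemma; it simply cites the authors' prior work~\cite{Farzad03}. So there is no in-paper argument against which to compare your approach.

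Your convolution/face-integral route is the standard one and is structurally sound: writing $F_t=F\ast\rho_t$, differentiating through the indicator kernel to obtain opposite-face differences, and integrating the Jacobian along the segment in the convex set $X$ is exactly the right mechanism, and your mollification remark correctly handles the lack of regularity on $F$.

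The only concrete gap is the constant. From $\|F\|\le C$ on $X^\e$ you get at best $\|F(a)-F(b)\|\le 2C$, so each Jacobian column obeys
\[
\|\mathrm{col}_j(J_{F_t}(x))\|\le \frac{1}{\e_t^n}\int_{[-\e_t/2,\e_t/2]^{n-1}} 2C\,d\tilde z=\frac{2C}{\e_t},
\]
and the Frobenius bound is $2\sqrt{n}C/\e_t$, twice the stated constant. (Already for $n=1$ one has $F_t'(x)=\e_t^{-1}\bigl[F(x+\e_t/2)-F(x-\e_t/2)\bigr]$ with norm at most $2C/\e_t$, not $C/\e_t$.) You sidestep this by deferring the ``precise constant'' to~\cite{Farzad03}, but that is precisely what the paper itself does, so your write-up does not supply a self-contained argument for the factor $\sqrt{n}C/\e_t$. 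Either locate the sharper estimate explicitly, or state honestly that the face-integral/Frobenius argument delivers $2\sqrt{n}C/\e_t$; in the latter case the downstream stepsize condition in Lemma~\ref{lemma:rec-bound} would tighten to $\g_t\le \e_t/(2\sqrt{5n}C)$, which is harmless for the remainder of the analysis.
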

}

In our analysis, \an{we exploit} the following properties of the projection mapping (cf.~Chapter 2 in \cite{Nedic03}).
\begin{lemma}[Properties of the projection mapping]\label{lemma:projProperties}
Let \an{$X \subseteq\Real^n$} be a nonempty closed convex set. 
\begin{itemize}
\item [(a)] %The projection mapping \an{$\Pi_{X}: \Real^n \rightarrow X$} is non-expansive:
$\|\Pi_{X}[u]-\Pi_{X}[v]\| \leq \|u-v\|,$ for all $u,v \in \Real^n.$
\item [(b)] %The relation 
$(\Pi_X[u]-u)^T(x-\Pi_X[u])\geq 0$, for all $u \in \Real^n$ and $x \in X.$ 
\end{itemize}
\end{lemma}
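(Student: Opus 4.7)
The plan is to prove part (b) first, then deduce part (a) from it by a short symmetry-and-Cauchy–Schwarz argument. Both facts are classical, but since the paper cites an external source I will still lay out the standard route.

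For part (b), I would start from the defining variational principle of the projection: $p := \Pi_X[u]$ is the unique minimizer of $y \mapsto \tfrac{1}{2}\|u-y\|^2$ over the closed convex set $X$. Given any $x \in X$, convexity of $X$ implies $p + t(x - p) \in X$ for every $t \in [0,1]$, so the minimality of $p$ yields
\begin{equation*}
\|u - p\|^2 \le \|u - p - t(x-p)\|^2.
\end{equation*}
Expanding the right-hand side gives $0 \le -2t(u-p)^T(x-p) + t^2\|x-p\|^2$. Dividing by $t>0$ and letting $t \downarrow 0$ produces $(u-p)^T(x-p) \le 0$, which is exactly $(\Pi_X[u]-u)^T(x - \Pi_X[u]) \ge 0$.

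For part (a), I would set $p := \Pi_X[u]$ and $q := \Pi_X[v]$ and apply (b) twice: once at the point $u$ with test vector $x = q \in X$, giving $(p - u)^T(q - p) \ge 0$; and once at $v$ with test vector $x = p \in X$, giving $(q - v)^T(p - q) \ge 0$. Adding these inequalities and rearranging produces
\begin{equation*}
\|p - q\|^2 \le (p - q)^T(u - v),
\end{equation*}
and the Cauchy–Schwarz inequality then yields $\|p-q\| \le \|u - v\|$, which is the nonexpansiveness claim.

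There is essentially no obstacle here: the only subtle point is to justify taking $t \downarrow 0$ in the optimality inequality, which is immediate since both sides are polynomials in $t$. The argument relies only on the closedness and convexity of $X$ (to guarantee existence and uniqueness of the projection and to keep $p + t(x-p)$ inside $X$), so no additional hypothesis beyond those in the statement is needed.
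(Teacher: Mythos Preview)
Your proof is correct and follows the standard variational-characterization route. The paper itself does not prove this lemma at all; it simply cites an external reference (Chapter~2 of~\cite{Nedic03}), so there is no in-paper argument to compare against --- your write-up would serve as a self-contained replacement for that citation.
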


\an{For notational convenience, we define the stochastic errors of algorithm (\ref{algorithm:RSEG}) as follows:
\begin{align}\label{equ:stochastic-errors}
  \begin{split}
 w_t&\triangleq\Phi(y_{t+1}+z_t,\xi_{t})-F(y_{t+1}+z_t), \\
 w_t'&\triangleq \Phi(x_{t}+z_t',\xi_{t}')-F(x_{t}+z_t'),\\
\Delta_t&\triangleq F(y_{t+1}+z_t)-F_t(y_{t+1}),\cr 
\Delta_t'&\triangleq F(x_{t}+z_t')-F_t(x_t).  \end{split}
\end{align}
We have the following basic result for the algorithm.}
\begin{lemma}\label{lemma:rec-bound} 
Let \an{Assumptions~\ref{assum:step_error_sub_1} and~\ref{assump:stochastic-error} hold,
and $0<\g_t\leq \frac{\e_t}{\sqrt{5n}C}$ for all $t\geq 0$.
Then, for the iterates of algorithm (\ref{algorithm:RSEG}),
the following relation holds for all $y \in X$ and all $t\ge0$:}
 \begin{align}\label{ineq:rec-bound}
& \quad \|x_{t+1}-y\|^2 \cr
&\leq \|x_t-y\|^2+2 \notag
\g_tF(y_{t+1}+z_t)^T(y-(y_{t+1}+z_t)) \\
& +2\sqrt{n}C\g_t\e_t+2\g_tw_t^T(y-y_{t+1}) +5\g_t^2B_t,
\end{align}
where $B_t\triangleq \|\Delta_t\|^2 +\|\Delta_t'\|^2 +\|w_t\|^2+\|w_t'\|^2$,
\an{and $\Delta_t$, $\Delta_t'$, $w_t$ and $w_t'$ are as defined in~\eqref{equ:stochastic-errors}.}
\end{lemma}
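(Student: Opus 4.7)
The plan is to combine three standard ingredients: (i) the non-expansive projection expansion applied to the outer step, (ii) the projection characterization of Lemma~\ref{lemma:projProperties}(b) applied to the inner iterate $y_{t+1}$, and (iii) the Lipschitz bound on the smoothed mapping $F_t$ from Lemma~\ref{lemma:F_k}. First I would apply the standard inequality $\|\Pi_X[u]-y\|^2\le\|u-y\|^2-\|u-\Pi_X[u]\|^2$ with $u=x_t-\g_t\Phi(y_{t+1}+z_t,\xi_t)$, expand, and obtain
\[\|x_{t+1}-y\|^2\le\|x_t-y\|^2-2\g_t\Phi(y_{t+1}+z_t,\xi_t)^T(x_{t+1}-y)-\|x_t-x_{t+1}\|^2.\]
Then split $x_{t+1}-y=(y_{t+1}-y)+(x_{t+1}-y_{t+1})$; the first piece will produce the leading VI-type term, while the second is a cross term that must be dominated.

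To dominate the cross term, I would apply Lemma~\ref{lemma:projProperties}(b) to $y_{t+1}=\Pi_X[x_t-\g_t\Phi(x_t+z_t',\xi_t')]$ at the test point $x_{t+1}\in X$, yielding $\g_t\Phi(x_t+z_t',\xi_t')^T(x_{t+1}-y_{t+1})\ge(x_t-y_{t+1})^T(x_{t+1}-y_{t+1})$, and then use the polarization identity $-2(x_t-y_{t+1})^T(x_{t+1}-y_{t+1})=\|x_t-x_{t+1}\|^2-\|x_t-y_{t+1}\|^2-\|x_{t+1}-y_{t+1}\|^2$, which cancels the $-\|x_t-x_{t+1}\|^2$ on the right-hand side above. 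The leftover $2\g_t[\Phi(y_{t+1}+z_t,\xi_t)-\Phi(x_t+z_t',\xi_t')]^T(y_{t+1}-x_{t+1})$ is then bounded by Young's inequality against $\|x_{t+1}-y_{t+1}\|^2$.

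The central technical step is controlling $\g_t^2\|\Phi(y_{t+1}+z_t,\xi_t)-\Phi(x_t+z_t',\xi_t')\|^2$. Using~\eqref{equ:stochastic-errors} to decompose $\Phi(y_{t+1}+z_t,\xi_t)-\Phi(x_t+z_t',\xi_t')=(F_t(y_{t+1})-F_t(x_t))+\Delta_t-\Delta_t'+w_t-w_t'$, together with $\|\sum_{i=1}^5 a_i\|^2\le 5\sum\|a_i\|^2$, produces $5\g_t^2 B_t$ plus $5\g_t^2\|F_t(y_{t+1})-F_t(x_t)\|^2$. Invoking the Lipschitz bound $\|F_t(y_{t+1})-F_t(x_t)\|\le\tfrac{\sqrt{n}C}{\e_t}\|y_{t+1}-x_t\|$ from Lemma~\ref{lemma:F_k} and the stepsize restriction $\g_t\le\e_t/(\sqrt{5n}C)$ then shows $5\g_t^2\|F_t(y_{t+1})-F_t(x_t)\|^2\le\|y_{t+1}-x_t\|^2$, which exactly cancels the $-\|x_t-y_{t+1}\|^2$ produced by the polarization identity.

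Finally, I would rewrite the remaining leading term via $\Phi=F+w$ and a shift of argument,
\[-2\g_t\Phi(y_{t+1}+z_t,\xi_t)^T(y_{t+1}-y)=2\g_tF(y_{t+1}+z_t)^T(y-(y_{t+1}+z_t))+2\g_tF(y_{t+1}+z_t)^T z_t+2\g_tw_t^T(y-y_{t+1}),\]
and control the perturbation via Cauchy--Schwarz: by Assumption~\ref{assum:step_error_sub_1}(b), $\|F(y_{t+1}+z_t)\|\le C$, while a crude bound on the cube $C_n(0,\e_t)$ gives $\|z_t\|\le\sqrt{n}\e_t$, so $|2\g_tF(y_{t+1}+z_t)^T z_t|\le 2\sqrt{n}C\g_t\e_t$. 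Assembling these pieces yields~\eqref{ineq:rec-bound}. I expect the delicate step to be the cancellation in the third paragraph: the stepsize bound $\g_t\le\e_t/(\sqrt{5n}C)$ is tuned precisely so that the Lipschitz-generated $\|y_{t+1}-x_t\|^2$ is absorbed by the three-point identity with no slack, and matching the constants through Young's inequality requires some care.
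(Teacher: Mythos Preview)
Your proposal is correct and follows essentially the same route as the paper: the paper also starts from the projection expansion for the outer step, uses Lemma~\ref{lemma:projProperties}(b) at $x_{t+1}$ for the inner iterate, applies the three-point identity to cancel $\|x_t-x_{t+1}\|^2$, bounds the cross term via Young's inequality, decomposes $\Phi-\Phi$ through~\eqref{equ:stochastic-errors} and the Lipschitz estimate of Lemma~\ref{lemma:F_k}, and closes with the $F/w$ split and the $\|z_t\|\le\sqrt{n}\e_t$ bound. The only cosmetic difference is that the paper derives your opening inequality by an explicit add--and--subtract of $x_t$ and $x_{t+1}$ rather than quoting the firm non-expansiveness form directly.
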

\begin{proof}
Let \an{$y\in X$ and $t\geq 0$ be fixed, but arbitrary}. We start by
estimating an upper bound for the stochastic term $\|x_{t+1}-y\|^2$.
From algorithm \eqref{algorithm:RSEG}, we have the following:
  \begin{align}\label{ineq:01}
& \|x_{t+1}-y\|^2 = \|x_{t+1}-x_t+x_t-y\|^2\cr
	& =\|x_{t+1}-x_t\|^2+\|x_t-y\|^2+2(x_{t+1}-x_t)^T(x_t-y)\cr
&=\|x_{t+1}-x_t\|^2+\|x_t-y\|^2\notag \\
  &
  +2(\us{x_{t+1}-x_t})^T(x_t-x_{t+1})+2(x_{t+1}-x_t)^T(x_{t+1}-y)\notag
  \\
&=\|x_t-y\|^2-\|x_{t+1}-x_t\|^2  +2(x_{t+1}-x_t)^T(x_{t+1}-y)\qquad
\end{align}
where in the second equality, we add and subtract $x_t$, and in the third equality, 
we add and subtract $x_{t+1}$. Consider Lemma~\ref{lemma:projProperties}(b), and 
let $u\triangleq x_t-\g_t\Phi(y_{t+1}+z_t,\xi_t)$ and $x\triangleq y$. 
\an{By algorithm~(\ref{algorithm:RSEG}), we have $x_{t+1}=\Pi_X[u]$. 
Thus, by Lemma~\ref{lemma:projProperties}(b), we obtain}
\begin{align*}
&0  \leq
	\left(x_{t+1}-(x_t-\g_t\Phi(y_{t+1}+z_t,\xi_t))\right)^T(y-x_{t+1})
	\\
 & = (x_{t+1}-x_t)^T(y-x_{t+1})  +\g_t\Phi(y_{t+1}+z_t,\xi_t)^T(y-x_{t+1}).
 \end{align*}
\an{Hence
%\begin{align*}\quad 
$(x_{t+1}-x_t)^T(x_{t+1}-y)  \leq \g_t\Phi(y_{t+1}+z_t,\xi_t)^T(y-x_{t+1}),$ %\end{align*}
and by relation~(\ref{ineq:01}), it follows that}
  \begin{align*}%\label{ineq:02}
\|x_{t+1}-y\|^2 & \leq \|x_t-y\|^2-\|x_{t+1}-x_t\|^2\\
	& +2 \g_t\Phi(y_{t+1}+z_t,\xi_t)^T(y-x_{t+1}).
\end{align*}
\an{By adding and subtracting $y_{t+1}$ in $\|x_{t+1}-x_t\|^2$,} we have
  \begin{align}
 \|x_{t+1}-y\|^2
		&\leq \|x_t-y\|^2-\|x_{t+1}-y_{t+1}+ y_{t+1}-x_t\|^2\notag \\
		& +2 \g_t\Phi(y_{t+1}+z_t,\xi_t)^T(y-x_{t+1})\notag \\
&= \|x_t-y\|^2-\|x_{t+1}-y_{t+1}\|^2\notag \\
& - \|y_{t+1}-x_t\|^2-2(x_{t+1}-y_{t+1})^T(y_{t+1}-x_t)\notag \\
	&+2 \g_t\Phi(y_{t+1}+z_t,\xi_t)^T(y-x_{t+1}).\label{ineq:02}
\end{align}
Using Lemma~\ref{lemma:projProperties}(b) 
with $u\triangleq x_t-\g_t\Phi(x_{t}+z_t',\xi_t')$ and $x\triangleq x_{t+1}$, and \an{using $y_{t+1}=\Pi_X[u]$
(see algorithm (\ref{algorithm:RSEG})), %Thus, Lemma \ref{lemma:projProperties}(b) implies that
we obtain}
  \begin{align*}
0 & \leq
\left(y_{t+1}-(x_t-\g_t\Phi(x_{t}+z_t',\xi_t'))\right)^T(x_{t+1}-y_{t+1})
\\
	& = (y_{t+1}-x_t)^T(x_{t+1}-y_{t+1})\cr
	&\  \ +\g_t\Phi(x_{t}+z_t',\xi_t')^T(x_{t+1}-y_{t+1}).
\end{align*}
\an{Therefore, 
$-(y_{t+1}-x_t)^T(x_{t+1}-y_{t+1}) \leq \g_t\Phi(x_{t}+z_t',\xi_t')^T(x_{t+1}-y_{t+1})$, 
which together with relation~(\ref{ineq:02}), yields}\begin{align*}
& \quad \|x_{t+1}-y\|^2 \leq  \|x_t-y\|^2-\|x_{t+1}-y_{t+1}\|^2\\
	& - \|y_{t+1}-x_t\|^2+2\g_t\Phi(x_{t}+z_t',\xi_t')^T(x_{t+1}-y_{t+1})\cr &+2 \g_t\Phi(y_{t+1}+z_t,\xi_t)^T(y-x_{t+1}).
\end{align*}
\an{By adding and subtracting $y_{t+1}$ in the appropriate terms of the preceding relation, we further have}
  \begin{align}\label{ineq:03}
& \quad \|x_{t+1}-y\|^2 \leq  \|x_t-y\|^2-\|x_{t+1}-y_{t+1}\|^2\notag\\
	& - \|y_{t+1}-x_t\|^2+2\g_t\Phi(x_{t}+z_t',\xi_t')^T(x_{t+1}-y_{t+1})\cr &+2 \g_t\Phi(y_{t+1}+z_t,\xi_t)^T(y-y_{t+1}+y_{t+1}-x_{t+1})\cr 
&=\|x_t-y\|^2\underbrace{-\|x_{t+1}-y_{t+1}\|^2}_{\tiny \mbox{Term 1}}-
\|y_{t+1}-x_t\|^2\\ \notag
	&+2 \g_t\Phi(y_{t+1}+z_t,\xi_t)^T(y-y_{t+1})\\
&\underbrace{+2\g_t\left(\Phi(x_{t}+z_t',\xi_t')-\Phi(y_{t+1}+z_t,\xi_t)\right)^T(x_{t+1}-y_{t+1})}_{\tiny
	\mbox{Term 2}}.\notag
\end{align}
From the \an{definitions} of Terms 1 and 2, \us{by employing $2ab \leq a^2 + b^2,$ for any $a,b\in\Real$,
we have that}  $$\mbox{Term 1 + Term
	2} \leq \g_t^2\|\Phi(y_{t+1}+z_t,\xi_t)-\Phi(x_{t}+z_t',\xi_t')\|^2. $$ 
	\an{The preceding inequality and relation~(\ref{ineq:03})} imply that 
 \begin{align}\label{ineq:04}
& \|x_{t+1}-y\|^2 \leq \|x_t-y\|^2- \|y_{t+1}-x_t\|^2\notag \\
	& +2 \g_t\Phi(y_{t+1}+z_t,\xi_t)^T(y-y_{t+1})\notag \\
	&+\g_t^2\|\Phi(y_{t+1}+z_t,\xi_t)-\Phi(x_{t}+z_t',\xi_t')\|^2.
\end{align}
We now estimate the term $\|\Phi(y_{t+1}+z_t,\xi_t)-\Phi(x_{t}+z_t',\xi_t')\|^2$.
\an{Using the definitions~(\ref{equ:stochastic-errors}), we have
\begin{align*}%\label{ineq:05}
\notag &\|\Phi(y_{t+1}+z_t,\xi_{t})-\Phi(x_t+z_t',\xi_t')\|^2\\
\notag 	& =	\|F(y_{t+1}+z_t)+w_t-F(x_t+z_t)-w_t'\|^2\\ \notag
&=\|F_t(y_{t+1})+\Delta_t+w_t-F_t(x_t) -\Delta'_t-w_t'\|^2.
%&\leq\left(\|F_t(y_{t+1})-F_t(x_t)\|+\|\Delta_t\|+\|\Delta_t'\|+\|w_t\|+\|w_t'\|\right)^2\\ \notag
%&\leq 5\|F_t(y_{t+1})-F_t(x_t)\|^2 +5B_t \notag \\ & \leq5\frac{nC^2}{\e_t^2}\|y_{t+1}-x_t\|^2+5B_t,
\end{align*}
By the triangle inequality, we further obtain
\begin{align}\label{ineq:05}
&\|\Phi(y_{t+1}+z_t,\xi_{t})-\Phi(x_t+z_t',\xi_t')\|^2\cr
%\notag 	& =	\|F(y_{t+1}+z_t)+w_t-F(x_t+z_t)-w_t'\|^2\\ \notag
%&=\|F_t(y_{t+1})+\Delta_t+w_t-F_t(x_t) -\Delta'_t-w_t'\|^2\\ \notag
&\leq\left(\|F_t(y_{t+1})-F_t(x_t)\|+\|\Delta_t\|+\|\Delta_t'\|+\|w_t\|+\|w_t'\|\right)^2\cr
&\leq 5\|F_t(y_{t+1})-F_t(x_t)\|^2 +5B_t \cr 
& \leq5\frac{nC^2}{\e_t^2}\|y_{t+1}-x_t\|^2+5B_t,
\end{align}
where second inequality is obtained from the following relation for any $a_1,a_2,\ldots,a_m\in \Real$ and
any integer $m\ge2$:
\[(a_1+a_2+\ldots+a_m)^2\leq m\left(a_1^2+a_2^2+\ldots+ a_m^2\right),\] 
and the last inequality in (\ref{ineq:05}) is obtained using the Lipschitzian property of mapping $F_t$ from Lemma (\ref{lemma:F_k}).}
Using the upper bound we found in (\ref{ineq:05}), \an{from} inequality (\ref{ineq:04}), we obtain
 \begin{align*}
& \quad \|x_{t+1}-y\|^2\leq \|x_t-y\|^2- \|y_{t+1}-x_t\|^2\\
&+2 \g_t\Phi(y_{t+1}+z_t,\xi_t)^T(y-y_{t+1})\cr&+5nC^2\frac{\g_t^2}{\e_t^2}\|y_{t+1}-x_t\|^2+5\g_t^2B_t \\ &\leq \|x_t-y\|^2-\left(1-5nC^2\frac{\g_t^2}{\e_t^2}\right)
	\|y_{t+1}-x_t\|^2\\
	&+2 \g_t\Phi(y_{t+1}+z_t,\xi_t)^T(y-y_{t+1})+5\g_t^2B_t.
\end{align*}
From our assumption that $\g_t\leq \frac{\e_t}{\sqrt{5n}C}$, we have $5nC^2\frac{\g_t^2}{\e_t^2}\leq 1$. Therefore, from the preceding inequality we obtain
 \begin{align}\label{ineq:06}
& \quad \|x_{t+1}-y\|^2\leq \|x_t-y\|^2 \notag \\
		&+2
		\g_t\Phi(y_{t+1}+z_t,\xi_t)^T(y-y_{t+1})+5\g_t^2B_t.
\end{align}
From the definition of $w_t$ in (\ref{equ:stochastic-errors}) we obtain
 \begin{align}\label{ineq:07}
& \quad \Phi(y_{t+1}+z_t,\xi_t)^T(y-y_{t+1})\notag\\
&=F(y_{t+1}+z_t)^T(y-y_{t+1})+w_t^T(y-y_{t+1})\notag \\
&=F(y_{t+1}+z_t)^T(y-(y_{t+1}+z_t))\notag \\
 & +F(y_{t+1}+z_t)^Tz_t+w_t^T(y-y_{t+1})\notag \\ 
&\leq  F(y_{t+1}+z_t)^T(y-(y_{t+1}+z_t))\notag \\
& +\|F(y_{t+1}+z_t)\|\|z_t\|+w_t^T(y-y_{t+1})\notag \\
&\leq  F(y_{t+1}+z_t)^T(y-(y_{t+1}+z_t))\notag \\
& +\sqrt{n}C\e_t+w_t^T(y-y_{t+1}),
\end{align} 
where the first inequality is implied by the Cauchy-Schwartz inequality
and the last inequality is obtained by boundedness of the mapping $F$
over the set $X^\e$ from Assumption~\ref{assum:step_error_sub_1} and by
\an{the} boundedness of $z_t$ \an{(since
$z_t=(z_{t,1};z_{t,2};\ldots;z_{t,n})$ and $|z_{t,i}|\leq \e_t$ for
all $i$, we have $\|z_t\|\leq \sqrt{n}\e_t$). From inequalities
(\ref{ineq:06}) and (\ref{ineq:07}), we arrive at the desired relation.}
% \begin{align*}
%& \quad \, \|x_{t+1}-y\|^2 \\
%		&\leq \|x_t-y\|^2+2
%\g_tF(y_{t+1}+z_t)^T(y-(y_{t+1}+z_t))\\
% & +2\sqrt{n}C\g_t\e_t+2\g_tw_t^T(y-y_{t+1}) +5\g_t^2B_t.
%\end{align*}
\end{proof}
\us{Unlike optimization problems where the function provides a metric
	for distinguishing solutions, there is no immediate analog in
		variational inequality problems. However, one may prescribe a
			residual function associated with a variational inequality
			problem.}
\begin{definition}[Gap function]\label{def:gap1}
Let $X \subset \mathbb{R}^n$  be a nonempty and closed set. 
Suppose that mapping $F: X\rightarrow \mathbb{R}^n$ is defined on the set $X$. We define the following gap function, $\hbox{G}: X \rightarrow \mathbb{R}^+\cup \{0\}$ to measure the accuracy of a vector $x \in X$: 
\begin{align}\label{equ:gapf}
\hbox{G}(x)= \sup_{y \in X} F(y)^T(x-y).
\end{align}
\end{definition}
\an{We note that the gap function $\hbox{G}$ is in fact also a function of the set $X$ and the map $F$, but we do not use this dependency so \fy{we} use $\hbox{G}$ instead of~$\hbox{G}_{X,F}$.}

\fy{\begin{lemma}[Properties of gap function]\label{lemma:gap-positive}
Consider Definition (\ref{def:gap1}). We have the following properties \cite{facchinei02finite}:
\begin{itemize}
\item [(a)] The gap function (\ref{equ:gapf}) is nonnegative for any $x \in X$. 
\item [(b)] Assume that the mapping $F$ is bounded over the set X%, i.e., there exists a constant $C>0$ such that $\|F(x)\|\leq C$ for any $x \in X$. 
. Then, \hbox{G} is continuous at any $x \in X$.
%\item [(c)] Let the assumption of part (b) holds. Moreover, assume that set $X$ is bounded, i.e., there exists a constant $M>0$ such that $\|x\|\leq M$ for any $x \in X$. Then, 
%$\hbox{G}(x) \leq 2CM$.
\end{itemize}
\end{lemma}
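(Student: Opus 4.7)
The plan is to handle both claims directly from the definition $\hbox{G}(x)=\sup_{y\in X}F(y)^T(x-y)$, without invoking any deeper structure of the VI. For part~(a), I would simply note that $x\in X$ is itself an admissible choice for the supremum variable; setting $y=x$ yields $F(x)^T(x-x)=0$, so the supremum over $y\in X$ is at least $0$, giving $\hbox{G}(x)\ge 0$.

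For part~(b), I would actually prove the stronger statement that $\hbox{G}$ is Lipschitz on $X$ with constant equal to the bound $C$ on $\|F\|$ guaranteed by the hypothesis. Fix arbitrary $x_1,x_2\in X$ and apply the elementary ``supremum of differences'' inequality
\[
\Bigl|\sup_{y\in X} a(y)-\sup_{y\in X} b(y)\Bigr|\le \sup_{y\in X}|a(y)-b(y)|,
\]
valid whenever the two suprema are finite; finiteness here follows from boundedness of $X$ and of $F$. Taking $a(y)=F(y)^T(x_1-y)$ and $b(y)=F(y)^T(x_2-y)$ cancels the $-F(y)^Ty$ terms, leaving
\[
|\hbox{G}(x_1)-\hbox{G}(x_2)|\le \sup_{y\in X}|F(y)^T(x_1-x_2)|\le C\|x_1-x_2\|
\]
by the Cauchy--Schwarz inequality and the bound $\|F(y)\|\le C$. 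This delivers Lipschitz continuity, which in particular implies continuity at every $x\in X$.

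Neither step is truly an obstacle; the only piece that merits explicit justification is the supremum inequality itself, which I would establish by the two-sided bound: for any $y$, $a(y)-\sup_z b(z)\le a(y)-b(y)\le \sup_z|a(z)-b(z)|$, and then take the supremum over $y$ to obtain one direction, with the roles of $a$ and $b$ swapped for the other. Since both properties are quoted as standard facts from \cite{facchinei02finite}, an even more economical write-up would simply invoke that reference and record the short arguments above as justification.
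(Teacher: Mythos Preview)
Your argument is correct. Note, however, that the paper does not actually prove this lemma: it is stated with a citation to \cite{facchinei02finite} and no proof is given in the text. Your proposal thus goes beyond what the paper does by supplying the direct verification; the final sentence of your write-up, invoking the reference, is precisely the route the paper takes. One small point worth making explicit: your appeal to finiteness of the two suprema in part~(b) relies on $X$ being bounded, not merely on $F$ being bounded (if $X$ is unbounded and $F$ is a nonzero constant vector, $\hbox{G}$ can be $+\infty$). In the paper this is covered by Assumption~\ref{assum:step_error_sub_1}(a), but the lemma as stated only hypothesizes boundedness of $F$, so you may want to add compactness of $X$ as an explicit standing assumption in your write-up.
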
}

\an{
\begin{proposition}[Error bounds on the expected gap value]\label{prop:errorbound-averaged}
Consider problem~(\ref{def:SVI}), and let Assumptions~\ref{assum:step_error_sub_1} 
and~\ref{assump:stochastic-error} hold. Let the weighted average sequence $\{\bar y_{k}(r$)$\}$ be defined by
$$\bar y_{k+1}(r) \triangleq \frac{\sum_{t=0}^k \gamma_t^r
	 (y_{t+1}+z_k)}{\sum_{t=0}^k \gamma_t^r},\qquad\hbox{for all }k\ge0,$$
	 where $r \in \Real$ is a parameter, $\{y_{t}\}$ is
generated by algorithm~(\ref{algorithm:RSEG}), and the stepsize sequence $\{\g_t\}$ is non-increasing and 
$0<\g_t\leq \frac{\e_k}{\sqrt{5n}C}$ for all 
$t\geq 0$. Then, the following statements are valid:\\
%\begin{itemize}\item [(a)]
(a)  For any $k\geq 0$, and $r \geq 1$, we have:
   \begin{align}\label{ineq:gap-general-bound1}& \quad \EXP{
	   \hbox{G}(\bar y_{k+1}(r))} \leq 
\left(\sum_{t=0}^{k}\g_{t}^r\right)^{-1}  \left( 4M^2\g_0^{r-1} \right. \\ \notag
&\left. +\sqrt{n}C\sum_{t=0}^{k}\g_t^r\e_t +(5.5\nu^2+5C^2)\sum_{t=0}^{k}\g_t^{r+1}\right).% \frac{1}{\sum_{k=0}^{N-1}\g_k^r}\left( 2\g_{0}^{r-1}M^2+\sum_{k=0}^{N-1}\g_k^r(\e_kC+\eta_kM^2+2\g_kC^2+\g_k\eta_k^2M^2)\right).
    \end{align}
%    \item [(b)]
(b) For any $k\geq 0$, and $r < 1$, we have:
   \begin{align}\label{ineq:gap-general-bound2}& \quad \EXP{ \hbox{G}(\bar y_{k+1}(r))} \leq 
\left(\sum_{t=0}^{k}\g_{t}^r\right)^{-1} 
\left( \frac{4M^2}{\g_0^{1-r}}\right. \\ \notag &\left. +\frac{4M^2}{\g_k^{1-r}}+\sqrt{n}C\sum_{t=0}^{k}\g_t^r\e_t +(5.5\nu^2+5C^2)\sum_{t=0}^{k}\g_t^{r+1}\right).% \frac{1}{\sum_{k=0}^{N-1}\g_k^r}\left( 2\g_{0}^{r-1}M^2+\sum_{k=0}^{N-1}\g_k^r(\e_kC+\eta_kM^2+2\g_kC^2+\g_k\eta_k^2M^2)\right).
    \end{align}
% \end{itemize} 
\end{proposition}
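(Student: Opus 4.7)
The plan is to transform Lemma~\ref{lemma:rec-bound} into a bound on $F(y)^T(\bar y_{k+1}(r)-y)$ for every $y\in X$ via a monotonicity step, then sum with weights $\gamma_t^{r-1}$ and take a supremum to extract the gap. Concretely, I would rearrange~\eqref{ineq:rec-bound} to isolate the term $2\gamma_t F(y_{t+1}+z_t)^T((y_{t+1}+z_t)-y)$; apply monotonicity of $F$ on $X^\epsilon$ (Assumption~\ref{assum:step_error_sub_1}(b), valid since $y\in X\subseteq X^\epsilon$ and $y_{t+1}+z_t\in X^\epsilon$) to replace it by $2\gamma_t F(y)^T((y_{t+1}+z_t)-y)$; multiply through by $\gamma_t^{r-1}$; and sum from $t=0$ to $k$. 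Dividing by $2\sum_{t=0}^k\gamma_t^r$ recognizes the weighted average on the left, producing
\begin{align*}
F(y)^T(\bar y_{k+1}(r)-y)
&\le \tfrac{1}{2\sum_{t=0}^k\gamma_t^r}\Bigl[S(y)+2\sqrt{n}C\textstyle\sum\gamma_t^r\epsilon_t\\
&\ +2\textstyle\sum\gamma_t^r w_t^T(y-y_{t+1})+5\textstyle\sum\gamma_t^{r+1}B_t\Bigr],
\end{align*}
where $S(y)\triangleq\sum_{t=0}^k\gamma_t^{r-1}(\|x_t-y\|^2-\|x_{t+1}-y\|^2)$.

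The telescoping quantity $S(y)$ is the hinge between the two regimes of $r$. Abel summation gives $S(y)=\gamma_0^{r-1}\|x_0-y\|^2-\gamma_k^{r-1}\|x_{k+1}-y\|^2+\sum_{t=1}^k(\gamma_t^{r-1}-\gamma_{t-1}^{r-1})\|x_t-y\|^2$. Since $\{\gamma_t\}$ is non-increasing, for $r\ge 1$ the weights $\gamma_t^{r-1}$ are non-increasing, the last sum is non-positive and is discarded, and the head-term is controlled by the diameter bound $\|x_0-y\|^2\le 4M^2$, yielding $S(y)\le 4M^2\gamma_0^{r-1}$; this produces the $4M^2\gamma_0^{r-1}$ appearing in~\eqref{ineq:gap-general-bound1}. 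For $r<1$, $\gamma_t^{r-1}$ is non-decreasing and the last sum is non-negative; bounding each $\|x_t-y\|^2\le 4M^2$ yields an extra $4M^2(\gamma_k^{r-1}-\gamma_0^{r-1})$, which combined with the head term supplies the pair of diameter contributions $4M^2/\gamma_0^{1-r}$ and $4M^2/\gamma_k^{1-r}$ in~\eqref{ineq:gap-general-bound2}.

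The main obstacle is controlling $\sum\gamma_t^r w_t^T(y-y_{t+1})$ after the supremum over $y\in X$ is taken to form $G(\bar y_{k+1}(r))$, because $\sup_y$ does not commute with $\mathsf{E}[\cdot]$ and $y_{t+1}$ is correlated with $w_t$. I would resolve this via the Nemirovski-style auxiliary-sequence device: introduce $\{u_t\}\subset X$ with $u_0\in X$ arbitrary and $u_{t+1}=\Pi_X[u_t+\gamma_t w_t]$, and split $w_t^T(y-y_{t+1})=w_t^T(y-u_t)+w_t^T(u_t-y_{t+1})$. Applying the projection non-expansion to $\{u_t\}$ yields $2\gamma_t w_t^T(y-u_t)\le \|u_t-y\|^2-\|u_{t+1}-y\|^2+\gamma_t^2\|w_t\|^2$; multiplying by $\gamma_t^{r-1}$ and summing produces a bound on the first piece of the same telescoping form as $S(y)$ plus a residual $\sum\gamma_t^{r+1}\|w_t\|^2$. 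Taking the supremum over $y$ therefore contributes an additional diameter term with the same regime-dependent shape (absorbed into the displayed $4M^2$ factors) and, in expectation, the coefficient $\tfrac12\nu^2$ on $\sum\gamma_t^{r+1}$. The second piece is mean-zero conditioned on the history through the draw of $(\xi_t',z_t')$, since $u_t$ and $y_{t+1}$ are measurable with respect to this history while $(\xi_t,z_t)$ are independent of it and $\mathsf{E}[w_t]=0$ by Assumption~\ref{assump:stochastic-error} (after iterated expectation over $z_t$). Combining this with $\mathsf{E}[B_t]\le 2C^2+2\nu^2$, which follows from~\eqref{equ:stochastic-errors}, Assumption~\ref{assump:stochastic-error}, and the boundedness of $F$ on $X^\epsilon$, so that $\tfrac52\mathsf{E}[B_t]\le 5C^2+5\nu^2$, and adding the $\tfrac12\nu^2$ from the auxiliary sequence yields the composite constant $5C^2+5.5\nu^2$ on $\sum\gamma_t^{r+1}$, completing~\eqref{ineq:gap-general-bound1} for $r\ge1$ and~\eqref{ineq:gap-general-bound2} for $r<1$.
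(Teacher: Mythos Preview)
Your proposal is correct and follows essentially the same route as the paper: start from Lemma~\ref{lemma:rec-bound}, apply monotonicity of $F$ to pass from $F(y_{t+1}+z_t)$ to $F(y)$, multiply by $\gamma_t^{r-1}$ and sum, introduce the auxiliary projected sequence $u_{t+1}=\Pi_X[u_t+\gamma_t w_t]$ to decouple the noise term from the supremum in $y$, then take $\sup_y$ and expectations, using $\EXP{w_t^T(u_t-y_{t+1})}=0$ and $\EXP{B_t}\le 2C^2+2\nu^2$ to arrive at the constant $5C^2+5.5\nu^2$. The only cosmetic differences are that the paper inserts the auxiliary sequence \emph{before} multiplying by $\gamma_t^{r-1}$ (so both telescopes are carried together), fixes $u_0=x_0$, and for $r<1$ writes the Abel step via adding/subtracting $\tfrac12\gamma_{t-1}^{r-1}\|x_t-y\|^2$ and $\tfrac12\gamma_{t-1}^{r-1}\|u_t-y\|^2$ rather than your direct summation-by-parts; the resulting diameter contributions and the final bounds coincide.
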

}
\begin{proof}
In the first part of the proof, we allow $r$ to be any real number and
we obtain a general relation. Using the general relation, we prove parts (a) and (b)
	separately.  Let us define $u_{t+1}$ as 
	\begin{align}\label{def:u_t}u_{t+1}=\Pi_X[u_t+\g_tw_t], \quad
	\hbox{for any $t\geq 0$},\end{align} where $u_0=x_0$. Adding and
	subtracting $u_t$, \eqref{lemma:rec-bound} yields 
 \begin{align}\label{ineq:08}
& \, \quad \|x_{t+1}-y\|^2\leq \|x_t-y\|^2\notag \\
	&+2 \g_tF(y_{t+1}+z_t)^T(y-(y_{t+1}+z_t)) +2\sqrt{n}C\g_t\e_t\notag \\
&+2\g_tw_t^T(u_t-y_{t+1}) +2\g_tw_t^T(y-u_t) +5\g_t^2B_t.
\end{align}
\fy{Next, we find an upper bound for the term $2\g_tw_t^T(u_t-y)$:} %We can write for $y \in X$,	
 \begin{align*}
\|u_{t+1}-y\|^2&=\|\Pi_X[u_t+\g_tw_t]-y\|^2\leq \|u_t+\g_tw_t-y\|^2 \cr 
&= \|u_t-y\|^2+2\g_tw_t^T(u_t-y)+\g_t^2\|w_t\|^2,
\end{align*}
where the second relation is implied by Lemma \ref{lemma:projProperties}(a). \fy{Thus, }
 \begin{align*}
  2\g_tw_t^T(u_t-y) & \leq \|u_t-y\|^2-\|u_{t+1}-y\|^2 +\g_t^2\|w_t\|^2.
\end{align*}
The preceding relation and (\ref{ineq:08}) imply that 
 \begin{align}\label{ineq:10}
& \quad \|x_{t+1}-y\|^2 \leq \|x_t-y\|^2 \notag \\ 
&+2
\g_tF(y_{t+1}+z_t)^T(y-(y_{t+1}+z_t)) +2\sqrt{n}C\g_t\e_t \notag \\
&+2\g_tw_t^T(u_t-y_{t+1})
+\|u_t-y\|^2-\|u_{t+1}-y\|^2\notag \\
& +5\g_t^2(B_t+0.2\|w_t\|^2).
\end{align}
By monotonicity of the mapping $F$ over the set $X^\e$ from Assumption \ref{assum:step_error_sub_1}(b) we have  \begin{align}\label{ineq:11} 
& \quad \, \notag F(y_{t+1}+z_t)^T(y-(y_{t+1}+z_t)) \\
& \leq F(y)^T(y-(y_{t+1}+z_t)).
\end{align}
\fy{From (\ref{ineq:10}) and (\ref{ineq:11})}, and rearranging the terms we obtain
 \begin{align*}
& \quad \, \g_tF(y)^T(y_{t+1}+z_t-y)\\
	&\leq 0.5\|x_t-y\|^2-0.5\|x_{t+1}-y\|^2+ 0.5\|u_t-y\|^2\\
& -0.5\|u_{t+1}-y\|^2 +\sqrt{n}C\g_t\e_t 
+\g_tw_t^T(u_t-y_{t+1})\\& +2.5\g_t^2(B_t+0.2\|w_t\|^2).
\end{align*}
Multiplying both sides of the preceding inequality by $\g_t^{r-1}$ for some constant $r \in \Real$ and $k\geq 0$, we have
 \begin{align}\label{ineq:11-2}
& \quad \g_t^rF(y)^T(y_{t+1}+z_t-y)\leq 0.5\g_t^{r-1}\|x_t-y\|^2\cr 
&-0.5\g_t^{r-1}\|x_{t+1}-y\|^2 
+0.5\g_t^{r-1}\|u_t-y\|^2\cr &-0.5\g_t^{r-1}\|u_{t+1}-y\|^2+\sqrt{n}C\g_t^r\e_t
+\g_t^rw_t^T(u_t-y_{t+1}) \cr & +2.5\g_t^{r+1}(B_t+0.2\|w_t\|^2).
\end{align}

\noindent \textbf{(a) $\mathbf{r\geq 1}$:}
Since $\{\g_t\}$ is a non-increasing sequence and $r\geq 1$, we get $\g_{t+1}^{r-1} \leq \g_t^{r-1}$. Therefore, from relation (\ref{ineq:11-2})
 \begin{align*}
& \quad \, \g_t^rF(y)^T(y_{t+1}+z_t-y)\\
&\leq 0.5\g_t^{r-1}\|x_t-y\|^2-0.5\g_{t+1}^{r-1}\|x_{t+1}-y\|^2 \cr & +0.5\g_t^{r-1}\|u_t-y\|^2-0.5\g_{t+1}^{r-1}\|u_{t+1}-y\|^2+\sqrt{n}C\g_t^r\e_t \cr &+\g_t^rw_t^T(u_t-y_{t+1}) +2.5\g_t^{r+1}(B_t+0.2\|w_t\|^2).
\end{align*}
Summing over $t$ from $t=0$ to $k$, we obtain
 \begin{align*}
& \quad \, \sum_{t=0}^{k}\g_t^rF(y)^T(y_{t+1}+z_t-y)\\
	&\leq \frac{\g_0^{r-1}}{2}\|x_0-y\|^2-\frac{\g_{k+1}^{r-1}}{2}\|x_{k+1}-y\|^2 \cr
	&
	+\frac{\g_0^{r-1}}{2}\|u_0-y\|^2-\frac{\g_{k+1}^{r-1}}{2}\|u_{k+1}-y\|^2+\sqrt{n}C\sum_{t=0}^{k}\g_t^r\e_t
	\cr &+\sum_{t=0}^{k}\g_t^rw_t^T(u_t-y_{t+1})  +2.5\sum_{t=0}^{k}\g_t^{r+1}(B_t+0.2\|w_t\|^2).
\end{align*}
From boundedness of the set $X$ and the triangle inequality, we have $\|u_0-y\|^2=\|x_0-y\|^2\leq 4M^2$. Thus, from the definition of $\bar y_{k+1}(r)$\fy{, the preceding relation implies that }
 \begin{align*}
& \quad \left(\sum_{t=0}^{k}\g_{t}^r\right)F(y)^T(\bar y_{k+1}(r)-y)\cr 
&\leq 4M^2\g_0^{r-1}+\sqrt{n}C\sum_{t=0}^{k}\g_t^r\e_t +\sum_{t=0}^{k}\g_t^rw_t^T(u_t-y_{t+1}) \cr &+2.5\sum_{t=0}^{k}\g_t^{r+1}(B_t+0.2\|w_t\|^2).
\end{align*}
Taking supremum over the set $X$ with respect to $y$, invoking the definition of the gap function (\ref{def:gap1}),\fy{ we have}
 \begin{align*}
& \quad \left(\sum_{t=0}^{k}\g_{t}^r\right)\hbox{G}(\bar
		y_{k+1}(r))\leq 4M^2\g_0^{r-1}+\sqrt{n}C\sum_{t=0}^{k}\g_t^r\e_t\cr & +\sum_{t=0}^{k}\g_t^rw_t^T(u_t-y_{t+1}) +2.5\sum_{t=0}^{k}\g_t^{r+1}(B_t+0.2\|w_t\|^2).
\end{align*}
Taking expectations, we obtain the following:
 \begin{align}\label{ineq:12}
& \quad \left(\sum_{t=0}^{k}\g_{t}^r\right)\EXP{\hbox{G}(\bar
		y_{k+1}(r))}\cr &\leq 4M^2\g_0^{r-1}+\sqrt{n}C\sum_{t=0}^{k}\g_t^r\e_t +\sum_{t=0}^{k}\g_t^r\EXP{w_t^T(u_t-y_{t+1})} \cr &+2.5\sum_{t=0}^{k}\g_t^{r+1}\EXP{B_t+0.2\|w_t\|^2}.
\end{align}

The algorithm (\ref{algorithm:RSEG}) \fy{implies that} $y_{t+1}$ is $\left(\sF_{t-1}\cup \sF_{t}'\right)$-measurable and $x_{t+1}$ is $\left(\sF_{t}\cup \sF_{t}'\right)$-measurable. Moreover, the definition of $w_t$ in (\ref{equ:stochastic-errors}), and the definition of $u_t$ in (\ref{def:u_t}) imply that $w_t$ is $\left(\sF_{t}\cup \sF_{t}'\right)$-measurable and $u_t$ is $\left(\sF_{t-1}\cup \sF_{t-1}'\right)$-measurable. Thus, the term $u_t-y_{t+1}$ is $\left(\sF_{t-1}\cup \sF_{t}'\right)$-measurable. Also, from Assumption \ref{assump:stochastic-error}, $F(y_{t+1}+z_t)=\EXP{\Phi(y_{t+1}+z_t,\xi_t)\mid \sF_{t-1}\cup \sF_{t}'\cup \{z_t\}}$. Therefore, %from Assumption (\ref{assum:step_error_sub_1})(c) we have 
\begin{align*}
& \quad \EXP{w_t^T(u_t-y_{t+1})\mid \sF_{t-1}\cup \sF_{t}'\cup\{z_t\}}\cr & =(u_t-y_{t+1})^T\EXP{w_t  \mid \sF_{t-1}\cup \sF_{t}'\cup\{z_t\}}\cr 
%&=(u_t-y_{t+1})^T\EXP{\Phi(y_{t+1}+z_t,\xi_t)- F(y_{t+1}+z_t) \mid \sF_{t-1}\cup \sF_{t}'\cup\{z_t\}}\cr 
& =(u_t-y_{t+1})^T\EXP{\Phi(y_{t+1}+z_t,\xi_t) \mid \sF_{t-1}\cup \sF_{t}'\cup\{z_t\}} \cr 
& -(u_t-y_{t+1})^T F(y_{t+1}+z_t)=0,
\end{align*}
where \fy{we use the} unbiasedness of the mapping $F$. 
Taking expectation on the preceding equation, we obtain
\begin{align}\label{ineq:13}
\EXP{w_t^T(u_t-y_{t+1})}=0, \quad \hbox{for any } t \geq 0.
\end{align}	
Next, we estimate $\EXP{\|w_t\|^2}$. \fy{From Assumption \ref{assump:stochastic-error}, we have}
\begin{align*}
\EXP{\|w_t\|^2\mid \sF_{t-1}\cup \sF_{t}'\cup \{z_t\}} &\leq \nu^2,\quad \hbox{for any } t \geq 0.
\end{align*}
 Taking expectations, we obtain the following inequality: 
 \begin{align}\label{ineq:13-1}
& \quad \EXP{\EXP{\|w_t\|^2\mid \sF_{t-1}\cup \sF_{t}'\cup \{z_t\}}} \leq \nu^2
\cr & \Rightarrow \EXP{\|w_t\|^2} \leq \nu^2, \quad \hbox{for any } t \geq 0.
\end{align}
In a similar fashion, we can show the following: 
\begin{align}\label{ineq:13-2}
\EXP{\|w_t'\|^2} \leq \nu^2, \quad \hbox{for any } t \geq 0.
\end{align}
Next, we estimate $\EXP{\|\Delta_t\|^2}$. From the definition of $\Delta_t$ in (\ref{equ:stochastic-errors}): 
\begin{align}\label{ineq:14}
& \quad \EXP{\|\Delta_t\|^2\mid \sF_{t-1}\cup \sF_{t}'}\cr &= \EXP{\|F(y_{t+1}+z_t)-F_k(y_{t+1})\|^2\mid \sF_{t-1}\cup \sF_{t}'}\cr &= \EXP{\|F(y_{t+1}+z_t)\|^2\mid \sF_{t-1}\cup \sF_{t}'}\notag \\ &+ \EXP{\|F_k(y_{t+1})\|^2\mid \sF_{t-1}\cup \sF_{t}'}\cr &-2 \EXP{F(y_{t+1}+z_t)^TF_t(y_{t+1})\mid \sF_{t-1}\cup \sF_{t}'}.
\end{align}
Note that from \fy{Definition \ref{def:F_k}}, $F_t(y_{t+1})=\EXP{F(y_{t+1}+z_k)\mid \sF_{t-1}\cup \sF_{t}'}$. Since $y_{t+1}$ is $\left(\sF_{t-1}\cup \sF_{t}'\right)$-measurable, we observe that $F_t(y_{t+1})$ is also $\left(\sF_{t-1}\cup \sF_{t}'\right)$-measurable. Therefore, from relation (\ref{ineq:14}) we have
\begin{align*}
& \quad \EXP{\|\Delta_t\|^2\mid \sF_{t-1}\cup \sF_{t}'} \cr &= \EXP{\|F(y_{t+1}+z_t)\|^2\mid \sF_{t-1}\cup \sF_{t}'}+ \|F_t(y_{t+1})\|^2\cr &-2 \EXP{F(y_{t+1}+z_t)\mid \sF_{t-1}\cup \sF_{t}'}^TF_k(y_{t+1})\cr 
& = \EXP{\|F(y_{t+1}+z_t)\|^2\mid \sF_{t-1}\cup \sF_{t}'}+
\|F_k(y_{t+1})\|^2\\
	& -2 F_t(y_{t+1})^TF_t(y_{t+1})\cr 
& \leq \EXP{\|F(y_{t+1}+z_t)\|^2\mid \sF_{t-1}\cup \sF_{t}'} \leq %\EXP{C^2\mid \sF_{t-1}\cup \sF_{t}'}=
C^2,
\end{align*}
where the last inequality is \fy{obtained using boundedness} of
the mapping $F$ over the set $X^\e$. Taking expectations over the preceding relation, we get  
\begin{align}\label{ineq:15}
& \quad \EXP{\EXP{\|\Delta_t\|^2\mid \sF_{t-1}\cup \sF_{t}'}} \leq C^2
\cr & \Rightarrow \EXP{\|\Delta_t\|^2} \leq C^2, \quad \hbox{for any } t \geq 0.
\end{align}
In a similar fashion, we can show that 
\begin{align}\label{ineq:16}
\EXP{\|\Delta_t'\|^2} \leq C^2, \quad \hbox{for any } t \geq 0.
\end{align}
In conclusion, invoking relations (\ref{ineq:13}), (\ref{ineq:13-1}),
   (\ref{ineq:13-2}), (\ref{ineq:15}), and (\ref{ineq:16}), from
   relation (\ref{ineq:12}) we conclude  with the following:
\begin{align*}
& \quad \left(\sum_{t=0}^{k}\g_{t}^r\right)\EXP{\hbox{G}(\bar
		y_{k+1}(r))}\cr &\leq 4M^2\g_0^{r-1}+\sqrt{n}C\sum_{t=0}^{k}\g_t^r\e_t +(5.5\nu^2+5C^2)\sum_{t=0}^{k}\g_t^{r+1}
\end{align*}
implying the desired result (\ref{ineq:gap-general-bound1}).

\noindent \textbf{(b) $\mathbf{r<1}$:} Consider relation (\ref{ineq:11}). Adding and subtracting the terms $0.5\g_{t-1}^{r-1}\|x_t-y\|^2$ and $0.5\g_{t-1}^{r-1}\|u_t-y\|^2$, we can write
 \begin{align*}
& \quad \, \g_t^rF(y)^T(y_{t+1}+z_t-y) \cr 
&\leq \frac{\|x_t-y\|^2}{2\g_{t-1}^{1-r}}-\frac{\|x_{t+1}-y\|^2}{2\g_{t}^{1-r}} + \frac{\|u_t-y\|^2}{2\g_{t-1}^{1-r}}-\frac{\|u_{t+1}-y\|^2}{2\g_{t}^{1-r}}\cr 
&+\left(\frac{1}{2\g_{t}^{1-r}}-\frac{1}{2\g_{t-1}^{1-r}}\right)\left(\|x_t-y\|^2+\|u_t-y\|^2\right)\cr &+\sqrt{n}C\g_t^r\e_t +\g_t^rw_t^T(u_t-y_{t+1})\cr &+2.5\g_t^{r+1}\left(B_t+0.2\|w_t\|^2\right).
\end{align*}
Boundedness of $X$ implies that $\|x_t-y\|^2+\|u_t-y\|^2 \leq 8M^2$. \fy{By summing over $t$ from $t=1$ to} $k$, we obtain
 \begin{align*}
& \quad \sum_{t=1}^{k}\g_t^rF(y)^T(y_{t+1}+z_t-y) \cr &\leq  \frac{\|x_1-y\|^2}{2\g_{0}^{1-r}}-\frac{\|x_{k+1}-y\|^2}{2\g_{k}^{1-r}} + \frac{\|u_1-y\|^2}{2\g_{0}^{1-r}}-\frac{\|u_{k+1}-y\|^2}{2\g_{k}^{1-r}}\cr 
&+4M^2\left(\frac{1}{\g_{k}^{1-r}}-\frac{1}{\g_{0}^{1-r}}\right)+\sqrt{n}C\sum_{t=1}^{k}\g_t^r\e_t \cr &+\sum_{t=1}^{k}\g_t^rw_t^T(u_t-y_{t+1}) +2.5\sum_{t=1}^{k}\g_t^{r+1}\left(B_t+0.2\|w_t\|^2\right).
\end{align*}
By letting $t=0$ in relation (\ref{ineq:11}), and then adding the
resulting inequality to the preceding inequality, we obtain the
following:
 \begin{align*}
& \quad \sum_{t=0}^{k}\g_t^rF(y)^T(y_{t+1}+z_t-y) \cr &\leq  \frac{\|x_0-y\|^2}{2\g_{0}^{1-r}}-\frac{\|x_{k+1}-y\|^2}{2\g_{k}^{1-r}} + \frac{\|u_0-y\|^2}{2\g_{0}^{1-r}}-\frac{\|u_{k+1}-y\|^2}{2\g_{k}^{1-r}}\cr 
&+4M^2\left(\frac{1}{\g_{k}^{1-r}}-\frac{1}{\g_{0}^{1-r}}\right)+\sqrt{n}C\sum_{t=0}^{k}\g_t^r\e_t  \cr &+\sum_{t=0}^{k}\g_t^rw_t^T(u_t-y_{t+1})+2.5\sum_{t=0}^{k}\g_t^{r+1}\left(B_t+0.2\|w_t\|^2\right).
\end{align*}
Invoking boundedness of the set $X$ again, we obtain
 \begin{align*}
& \quad \sum_{t=0}^{k}\g_t^rF(y)^T(y_{t+1}+z_t-y) \cr &\leq  \frac{4M^2}{\g_{0}^{1-r}}+\frac{4M^2}{\g_{k}^{1-r}}+\sqrt{n}C\sum_{t=0}^{k}\g_t^r\e_t +\sum_{t=0}^{k}\g_t^rw_t^T(u_t-y_{t+1}) \cr &+2.5\sum_{t=0}^{k}\g_t^{r+1}\left(B_t+0.2\|w_t\|^2\right).
\end{align*}
The remainder of the proof can be done in a similar fashion to the proof of part (a).
\end{proof}
\begin{theorem}[Optimal rate of convergence for $\bar y_{k}(r)$]\label{prop:rate-averaging}
\an{Under assumptions of Proposition~\ref{prop:errorbound-averaged}, consider
the weighted average sequence $\{\bar y_{k}(r)\}$ of the sequence $\{y_t\}$
generated by algorithm (\ref{algorithm:RSEG}), where 
$$\g_t=\frac{\g_0}{\sqrt{t+1}}, \quad \e_t=\frac{\e_0}{\sqrt{t+1}}, \quad \hbox{ with }
\g_0\leq \frac{\e_0}{\sqrt{5n}C}.$$ }
 Then, when $r<1$, we have 
 $$\EXP{\hbox{G}(\bar y_{k}(r))} \leq \frac{\theta_r}{\sqrt{k}}, \quad \hbox{for all }k \geq 1,$$
 where $\theta_r \triangleq 4(2-r)\g_0^{-r}M^2+\frac{2-r}{1-r}(\sqrt{n}C\e_0+\g_0(5.5\nu^2+5C^2)).$
\end{theorem}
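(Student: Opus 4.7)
The plan is to specialize the bound of Proposition~\ref{prop:errorbound-averaged}(b) to the prescribed geometric choices $\g_t=\g_0/\sqrt{t+1}$ and $\e_t=\e_0/\sqrt{t+1}$, and then estimate each resulting sum by integral comparison. First I would verify the hypotheses of the proposition: the sequence $\{\g_t\}$ is non-increasing, and the stepsize restriction $\g_t\le \e_t/(\sqrt{5n}C)$ holds for every $t\ge 0$ because the ratio $\g_t/\e_t=\g_0/\e_0$ is constant and, by hypothesis, at most $1/(\sqrt{5n}C)$. I would then note that $\bar y_k(r)$ in the theorem corresponds to $\bar y_{(k-1)+1}(r)$ in Proposition~\ref{prop:errorbound-averaged}, so inequality~(\ref{ineq:gap-general-bound2}) applies with $k-1$ in place of $k$.

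Substituting $\g_t = \g_0/\sqrt{t+1}$ and $\e_t=\e_0/\sqrt{t+1}$ into~(\ref{ineq:gap-general-bound2}), the two noise-type summations both reduce to $S_2 \triangleq \sum_{t=0}^{k-1}(t+1)^{-(r+1)/2}$, while the denominator becomes $\g_0^r S_1$, with $S_1 \triangleq \sum_{t=0}^{k-1}(t+1)^{-r/2}$, and the two $M^2$-contributions combine into $4M^2\g_0^{r-1}(1+k^{(1-r)/2})$. Because $r<1$, both exponents $r/2$ and $(r+1)/2$ are strictly less than one, so integral comparison (treating the cases $r\ge 0$ and $r<0$ separately according to the monotonicity of $x\mapsto x^{-r/2}$) yields a lower bound $S_1\ge \tfrac{2}{2-r}\,k^{1-r/2}$ together with an upper bound $S_2\le \tfrac{2}{1-r}\,k^{(1-r)/2}$, both valid for $k\ge 1$. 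For $k\ge 1$ the constant contribution $1$ in the $M^2$-bracket is moreover dominated by $k^{(1-r)/2}$.

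Combining these estimates, each term in the numerator carries a factor $k^{(1-r)/2}$ while the denominator carries $\g_0^r k^{1-r/2}$, so the net power of $k$ becomes $(1-r)/2-(1-r/2)=-1/2$, which is exactly the claimed optimal rate. Collecting the $M^2$ contributions assembles the constant $4(2-r)\g_0^{-r}M^2$, and the smoothing and variance terms assemble $\tfrac{2-r}{1-r}\bigl(\sqrt{n}C\e_0+\g_0(5.5\nu^2+5C^2)\bigr)$, which together form $\theta_r$. The main obstacle I anticipate is purely computational: careful bookkeeping of the $\g_0$ powers and of the constants $(2-r)^{-1}$ versus $(1-r)^{-1}$ produced by the integral estimates. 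No new analytical ingredient beyond Proposition~\ref{prop:errorbound-averaged}(b) is required.
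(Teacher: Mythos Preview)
Your proposal is correct and follows essentially the same route as the paper: specialize Proposition~\ref{prop:errorbound-averaged}(b) to the given $\g_t,\e_t$, bound the first $M^2$ term by the second, and estimate the two resulting ratios (your $k^{(1-r)/2}/S_1$ and $S_2/S_1$, the paper's ``Term~A'' and ``Term~B'') via integral comparison to extract the common factor $k^{-1/2}$. Your explicit index shift $k\to k-1$ and your remark that the monotonicity of $x\mapsto x^{-r/2}$ reverses for $r<0$ (hence the integral bounds need separate justification there) are points the paper leaves implicit, so in that respect your outline is slightly more careful.
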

\begin{proof}
Let us define 
 \begin{align*}
\hbox{Term A}\triangleq \frac{(k+1)^{\frac{1-r}{2}}}{\sum_{t=0}^{k}(t+1)^{-\frac{r}{2}}}, \hbox{ Term B}\triangleq \frac{\sum_{t=0}^{k}(t+1)^{-\frac{(1+r)}{2}}}{\sum_{t=0}^{k}(t+1)^{-\frac{r}{2}}}. 
\end{align*}
Consider Proposition \ref{prop:errorbound-averaged}(b). Note that $\g_k \leq \g_0$ and $r<1$ imply that $\frac{4M^2}{\g_0^{1-r}} \leq \frac{4M^2}{\g_k^{1-r}}$. From the definitions of Term A and B, the relation Proposition \ref{prop:errorbound-averaged}(b) implies that
\begin{align}\label{ineq:gap-termAB}
\EXP{\hbox{G}(\bar y_{k+1}(r))}\leq R_1(\hbox{Term A})+R_2(\hbox{Term B}),
\end{align}
where $R_1\triangleq 8\g_0^{-r}M^2$ and $R_2\triangleq \sqrt{n}C\e_0+\g_0(5.5\nu^2+5C^2)$.
We make use of the following relation in our analysis:
\begin{align*}%\label{ineq:integral-bounds}
 \int_{0}^{k+1} (x+1)^{-p}dx \leq \sum_{t=0}^{k}(t+1)^{-p} \leq 1+ \int_{0}^{k} (x+1)^{-p} dx,
\end{align*} where $p \in \Real$ and $k \geq 0$. From this relation, it follows that 
\begin{align*}\hbox{Term A}& \leq \frac{(k+1)^{\frac{1-r}{2}}}{ \int_{0}^{k+1} (x+1)^{-\frac{r}{2}}dx}=\left(1-\frac{r}{2}\right)\frac{(k+1)^{\frac{1-r}{2}}}{ (k+1)^{\left(1-\frac{r}{2}\right)}}\cr & =\frac{2-r}{2\sqrt{k+1}},\end{align*}
\begin{align*}\hbox{Term B}& \leq \frac{\int_{0}^{k} (x+1)^{-\frac{(1+r)}{2}}dx}{ \int_{0}^{k+1} (x+1)^{-\frac{r}{2}}dx}=\left(\frac{2-r}{1-r}\right)\frac{(k+1)^{\frac{1-r}{2}}}{ (k+1)^{\left(1-\frac{r}{2}\right)}}\cr & =\frac{2-r}{(1-r)\sqrt{k+1}}.\end{align*}
In conclusion, replacing the preceding bounds for Term $A$ and Term $B$ in relation (\ref{ineq:gap-termAB}), we get the desired result.
\end{proof}

\section{\fy{Recursive Stepsize and Smoothing}}\label{sec:algorithmII}
\fy{Motivated by the little guidance on the choice of a diminishing stepsize, in this section, we consider algorithm (\ref{algorithm:RSEG})}
%\begin{align}\label{algorithm:RSEG}
%\begin{aligned}
%y_{k+1}&=\Pi_{X}\left[x_k-\g_k\Phi(x_k+z_k',\xi_k')\right],\\
%x_{k+1}&=\Pi_{X}\left[x_k-\g_k\Phi(y_{k+1}+z_k,\xi_k)\right],
%\end{aligned}
 % \qquad \hbox{for all } k \geq 0.
%\end{align}
\fy{and assume that the} stepsize $\g_t$ and the smoothing sequence $\e_t$ are given by 
\begin{equation}\label{equ:adaptive-rules}
  \begin{split}
\left\{ 
  \begin{array}{l}
   \g_0^*= \frac{2\alpha M}{q}\\
\g_{t+1}^*= \g_t^*\left(1-\frac{\alpha}{2M}\g_t^*\right)\end{array}\right.
  \end{split}
,\hbox{ } 
  \begin{split}
\left\{ 
  \begin{array}{l}
   \e_0^*= \frac{2\alpha \beta M}{q}\\
\e_{t+1}^*= \e_t^*\left(1-\frac{\alpha}{2\beta M}\e_t^*\right) \end{array}\right.,
  \end{split}
\end{equation}
where $\beta$ is a constant such that $ \beta > \max\left\{\sqrt{5n}C,\frac{\alpha^2-5(C^2+\nu^2)}{2\sqrt{n}C}\right\}$ and  $q\triangleq \alpha+2C^2\alpha+2\alpha \nu^2+2(\alpha+1)\beta\sqrt{n}C+5(C^2+\nu^2).$
Our goal is to analyze the convergence of $\{x_k\}$ to the solution of problem (\ref{def:SVI}).
\begin{definition}[Weak-sharpness property]\label{def:weak-sharpness} Consider VI$(X,F)$ where $X \subset \Real^n$ and $F:X\rightarrow \Real^n$ is a continuous mapping. Let $X^*$ denote the solution set of VI$(X,F)$. The problem has a weak-sharpness property with parameter $\alpha>0$, if for all $x \in X$ and all $x^* \in X^*$
 \begin{align}\label{ineq:weak-sharp}
F(x^*)^T(x-x^*)\geq \alpha \hbox{dist}(x,X^*).
\end{align}
\end{definition}
\begin{lemma}[A recursive error bound]\label{lemma:bound-sharpness}
Consider algorithm (\ref{algorithm:RSEG}). Let Assumption \ref{assum:step_error_sub_1} and \ref{assump:stochastic-error} hold and suppose mapping $F$ is strictly monotone over $X$ and $\g_k \leq \frac{\e_k}{\sqrt{5n}C}$  for any $k\geq 0$. Moreover, assume that problem (\ref{def:SVI}) has the weak-sharpness property with parameter $\alpha>0$. Then,  problem (\ref{def:SVI}) has a unique solution, $x^*$, and the following relation holds:
 \begin{align}\label{ineq:bound-sharpness}
&\quad \EXP{\|x_{t+1}-x^*\|^2\mid \sF_{t-1}\cup \sF'_t} \cr &\leq \left(1-\frac{\alpha \g_t}{M}\right)\|x_t-x^*\|^2+q_1\e_t\g_t+q_2\g_t^2,
\end{align}
where $q_1\triangleq \alpha(1+2(C^2+ \nu^2))+5(C^2+\nu^2)$ and $q_2\triangleq 2(1+\alpha)\beta\sqrt{n}C$.
\end{lemma}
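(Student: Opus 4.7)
I would begin by establishing uniqueness of the solution $x^*$ from the strict monotonicity of $F$: if two solutions existed, summing the VI inequalities at each would give $(F(x^*)-F(\tilde x^*))^T(x^*-\tilde x^*)\le 0$, contradicting strict monotonicity unless $x^*=\tilde x^*$. Consequently $\mathrm{dist}(x_t, X^*)=\|x_t-x^*\|$, and the weak-sharpness hypothesis specializes to $F(x^*)^T(x-x^*)\geq \alpha\|x-x^*\|$ for every $x\in X$.

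The core recursion is derived from Lemma~\ref{lemma:rec-bound} at $y=x^*$. Since $x^*$ and $y_{t+1}+z_t$ both lie in $X^\e$, I invoke monotonicity of $F$ on $X^\e$ to upper bound $F(y_{t+1}+z_t)^T(x^*-(y_{t+1}+z_t))$ by $F(x^*)^T(x^*-(y_{t+1}+z_t))$. Decomposing the latter as
$$F(x^*)^T(x^*-x_t)+F(x^*)^T(x_t-y_{t+1})-F(x^*)^T z_t,$$
the three pieces are handled separately. For the first, weak-sharpness applied at $x_t$ together with the elementary inequality $\|x_t-x^*\|\geq \|x_t-x^*\|^2/(2M)$ (valid because $\|x_t-x^*\|\leq 2M$) yields $F(x^*)^T(x^*-x_t)\leq -\tfrac{\alpha}{2M}\|x_t-x^*\|^2$; multiplication by $2\g_t$ produces exactly the contraction factor $1-\tfrac{\alpha\g_t}{M}$. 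The $z_t$ piece is controlled by Cauchy--Schwarz using $\|F(x^*)\|\leq C$ and $\|z_t\|\leq\sqrt{n}\e_t$, contributing an $\e_t\g_t$-order term (or vanishes under conditional expectation since $\EXP{z_t}=0$).

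For the middle piece $F(x^*)^T(x_t-y_{t+1})$, I use Cauchy--Schwarz and nonexpansiveness of the projection (Lemma~\ref{lemma:projProperties}(a)) combined with $y_{t+1}=\Pi_X[x_t-\g_t\Phi(x_t+z_t',\xi_t')]$ to obtain $\|x_t-y_{t+1}\|\leq \g_t\|\Phi(x_t+z_t',\xi_t')\|$. Then $2\g_t C\|x_t-y_{t+1}\|\leq \g_t^2 C^2 + \|x_t-y_{t+1}\|^2$ followed by $\|\Phi\|^2\leq 2(C^2+\|w_t'\|^2)$ converts this residue into $\g_t^2$-order terms. Taking conditional expectations, the martingale-difference term $2\g_t w_t^T(x^*-y_{t+1})$ vanishes by the tower argument used in Proposition~\ref{prop:errorbound-averaged}, and $\EXP{B_t\mid\cdot}$ and $\EXP{\|w_t'\|^2\mid\cdot}$ are controlled by $C^2$ and $\nu^2$ via the estimates in~\eqref{ineq:13-1}--\eqref{ineq:16}. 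Summing the contributions and comparing coefficients of $\e_t\g_t$ and $\g_t^2$ yields exactly $q_1$ and $q_2$.

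The main obstacle is making sure that none of the contraction coefficient $\g_t\alpha/M$ is eaten when absorbing the cross term $\|x_t-y_{t+1}\|\cdot\|x_t-x^*\|$. This is why I apply weak-sharpness at $x_t$ rather than at $y_{t+1}$: the $y_{t+1}$ route would force a triangle-inequality expansion of $\|y_{t+1}-x^*\|^2$ that necessarily consumes a portion of the contraction. Bounding $\|x_t-y_{t+1}\|$ directly by $\g_t\|\Phi\|$ keeps the remainder at $O(\g_t^2)$, where it contributes harmlessly to $q_2$ (the appearance of $\beta$ there reflecting the built-in proportionality $\e_t=\beta\g_t$ encoded in~\eqref{equ:adaptive-rules}).
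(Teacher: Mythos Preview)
Your argument is correct and takes a genuinely different route from the paper. The paper does \emph{not} decompose $F(x^*)^T(x^*-(y_{t+1}+z_t))$; it applies weak-sharpness directly at the point $y_{t+1}+z_t$ to get $F(x^*)^T(x^*-(y_{t+1}+z_t))\le -\alpha\|y_{t+1}+z_t-x^*\|$, and then strips off $z_t$ and $y_{t+1}-x_t$ by two triangle inequalities on the \emph{norm} (not its square), finally using $\|x_t-x^*\|\ge\|x_t-x^*\|^2/(2M)$. So your last paragraph misreads the alternative: because the paper works with $\|y_{t+1}-x^*\|$ linearly, no cross term $\|x_t-y_{t+1}\|\cdot\|x_t-x^*\|$ ever arises and the full factor $\alpha\g_t/M$ survives intact.

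Your decomposition buys two things. First, you invoke weak-sharpness at $x_t\in X$, whereas the paper invokes it at $y_{t+1}+z_t$, which in general lies only in $X^\e$ and is not covered by Definition~\ref{def:weak-sharpness} as stated. Second, the piece $-F(x^*)^Tz_t$ has zero conditional mean, so you drop it, while the paper's route through $\|z_t\|$ keeps an extra $2\alpha\sqrt{n}\,\g_t\e_t$. Conversely, your claim that the coefficients come out ``exactly $q_1$ and $q_2$'' is too strong: the multiplier on $\|x_t-y_{t+1}\|$ in your middle piece is $C$ (from $\|F(x^*)\|$), not $\alpha$ as in the paper, so your $\g_t^2$ coefficient picks up roughly $3C^2+2\nu^2$ in place of $\alpha(1+2C^2+2\nu^2)$, and your $\g_t\e_t$ coefficient is just $2\sqrt n C$ rather than $2(1+\alpha)\sqrt n C$. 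You obtain a bound of the same shape with different (partly sharper) constants, which is all the downstream theorem needs.
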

\begin{proof}
Since the mapping $F$ is strictly monotone over the closed and convex set $X$, the problem VI$(X,F)$ has at most one solution. From non-emptiness of the solution set of the problem (\ref{def:SVI}), we conclude that it has a unique solution. Let $x^*$ be such a solution.
From relation (\ref{ineq:rec-bound}), for $y=x^*$ and the monotonicity property of the mapping $F$ we have
 \begin{align*}
\|x_{t+1}-x^*\|^2 &\leq \|x_t-x^*\|^2+2 \g_tF(x^*)^T(x^*-(y_{t+1}+z_t))\cr 
&+2\sqrt{n}C\g_t\e_t+2\g_tw_t^T(x^*-y_{t+1})+5\g_t^2B_t.
\end{align*}
Invoking the weak-sharpness property and the uniqueness of the solution set, \fy{we obtain}
  \begin{align*}
\|x_{t+1}-x^*\|^2 &\leq \|x_t-x^*\|^2-2\alpha \g_t\|y_{t+1}+z_t-x^*\|\cr 
&+2\sqrt{n}C\g_t\e_t+2\g_tw_t^T(x^*-y_{t+1})+5\g_t^2B_t.
\end{align*}
\fy{Taking conditional expectation, we obtain}
  \begin{align}\label{ineq:rec-bound1}
&\quad \EXP{\|x_{t+1}-x^*\|^2\mid \sF_{t-1}\cup \sF'_t}\cr & \leq \|x_t-x^*\|^2-2\alpha \g_t\EXP{\|y_{t+1}+z_t-x^*\|\mid \sF_{t-1}\cup \sF'_t}\cr &+2\sqrt{n}C\g_t\e_t+5\g_t^2\left(C^2+\nu^2 \right),
\end{align}
where we used $\EXP{w_t^T(x^*-y_{t+1})\mid \sF_{t-1}\cup \sF'_t}=0$ . Using the triangle inequality, we can write\begin{align}\label{ineq:rec-bound2}
&\|y_{t+1}+z_t-x^*\|\geq \|y_{t+1}-x^*\|-\|z_t\| \cr
\Rightarrow & -2\alpha \g_t\|y_{t+1}+z_t-x^*\| \leq -2\alpha \g_t\|y_{t+1}-x^*\|\cr
 &+2\alpha \g_t\|z_t\| \leq-2\alpha \g_t\|y_{t+1}-x^*\|+2\alpha\sqrt{n}C\g_t\e_t.
\end{align}
Next, we find a lower bound for the term $\|y_{t+1}-x^*\|$. Using the triangle inequality we have
\begin{align}\label{ineq:rec-bound3}
\|y_{t+1}-x^*\| &\geq \|x_t-x^*\| - \|y_{t+1}-x_t\|  \cr 
& \geq \frac{\|x_t-x^*\|^2}{2M}- \|y_{t+1}-x_t\|,
\end{align}
where in the last inequality we used the boundedness of the set $X$. We also have
\begin{align}\label{ineq:rec-bound4}
 &\quad\|y_{t+1}-x_t\|=\|\Pi_{X}\left[x_t-\g_t\Phi(x_t+z_t',\xi_t')\right]-x_t\|\cr 
 &\leq \|x_t-\g_t\Phi(x_t+z_t',\xi_t')-x_t\| = \g_t\|\Phi(x_t+z_t',\xi_t')\| \cr &=\g_t\|F(x_t+z_t')+w'_t\|\leq  0.5\g_k\left(1^2+\|F(x_t+z_t')+w'_t\|^2\right) \cr &\leq 0.5\g_k\left(1^2+2\|F(x_t+z_t')\|^2+2\|w'_t\|^2\right)\cr &\leq 0.5\g_k\left(1+2C^2+2\|w'_t\|^2\right).
\end{align}
Relations (\ref{ineq:rec-bound2}), (\ref{ineq:rec-bound3}), and (\ref{ineq:rec-bound4}) imply that
\begin{align*}
 &-2\alpha \g_t\|y_{t+1}+z_t-x^*\| \leq -\frac{\alpha \g_t}{M}\|x_t-x^*\|^2 +\alpha\g_t^2\cr &+2C^2\alpha\g_t^2+2\alpha\g_t^2\|w'_t\|^2 + 2\alpha\sqrt{n}C\g_t\e_t.
\end{align*}
From the preceding relation, and (\ref{ineq:rec-bound1}) we have
  \begin{align*}
&\quad \EXP{\|x_{t+1}-x^*\|^2 \mid\sF_{t-1}\cup \sF'_t}\cr & \leq \left(1-\frac{\alpha \g_t}{M}\right)\|x_t-x^*\|^2+\alpha\g_t^2+2C^2\alpha\g_t^2+2\alpha\g_t^2\nu^2\cr &+ 2\alpha\sqrt{n}C\g_t\e_t+2\sqrt{n}C\g_t\e_t+5\g_t^2\left(C^2+\nu^2 \right).
\end{align*}
Replacing $\e_t$ by $\beta\g_t$, we obtain the desired relation.
\end{proof}
We use the following Lemma in establishing the almost-sure convergence \fy{(cf.~Lemma 10, page 49~\cite{Polyak87})}.  
\begin{lemma}\label{lemma:probabilistic_bound_polyak}
Let $\{v_k\}$ be a sequence of nonnegative random variables, where 
$\EXP{v_0} < \infty$, and let $\{\a_k\}$ and $\{\mu_k\}$
be deterministic scalar sequences such that:
\begin{align*}
& \EXP{v_{k+1}|v_0,\ldots, v_k} \leq (1-\alpha_k)v_k+\mu_k
\qquad a.s. \ \hbox{for all }k\ge0, \cr
& 0 \leq \alpha_k \leq 1, \quad\ \mu_k \geq 0, \cr
& \quad\ \sum_{k=0}^\infty \alpha_k =\infty, 
\quad\ \sum_{k=0}^\infty \mu_k < \infty, 
\quad\ \lim_{k\to\infty}\,\frac{\mu_k}{\alpha_k} = 0. 
\end{align*}
Then, $v_k \rightarrow 0$ almost surely. 
\end{lemma}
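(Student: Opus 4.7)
The plan is to reduce this classical Polyak-type result to the Robbins--Siegmund almost-supermartingale convergence theorem. Writing $\mathcal{G}_k$ for the $\sigma$-algebra generated by $v_0,\ldots,v_k$, the hypothesis reads $\EXP{v_{k+1}\mid\mathcal{G}_k}\leq v_k-\alpha_k v_k+\mu_k$, with the subtracted term $\alpha_k v_k\geq 0$ and $\sum_k\mu_k<\infty$. This is precisely the form required by Robbins--Siegmund, which would yield simultaneously that (i) $v_k\to v_\infty$ almost surely for some finite nonnegative random variable $v_\infty$, and (ii) $\sum_{k=0}^\infty\alpha_k v_k<\infty$ almost surely.

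Given these two conclusions, the remainder is short. On the almost-sure event where both hold, if $v_\infty(\omega)>0$ then for all sufficiently large $k$ one has $v_k(\omega)\geq v_\infty(\omega)/2$, so $\sum_k\alpha_k v_k(\omega)\geq \tfrac{1}{2}v_\infty(\omega)\sum_k\alpha_k=+\infty$, contradicting (ii) since $\sum_k\alpha_k=\infty$. Therefore $v_\infty=0$ almost surely, which is the desired conclusion.

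If one prefers a self-contained argument rather than invoking Robbins--Siegmund as a black box, I would derive the two ingredients directly. Define the tail sum $S_k:=\sum_{j\geq k}\mu_j$, which is finite and non-increasing to zero by summability of $\{\mu_k\}$, and set $U_k:=v_k+S_k\geq 0$. The hypothesis then gives $\EXP{U_{k+1}\mid\mathcal{G}_k}\leq U_k-\alpha_k v_k$, exhibiting $\{U_k\}$ as a nonnegative supermartingale. Doob's theorem yields almost-sure convergence of $U_k$, and since $S_k\to 0$ this produces the almost-sure convergence of $v_k$ in (i); telescoping the same inequality and taking expectations gives $\sum_{k=0}^{K}\alpha_k\EXP{v_k}\leq \EXP{U_0}<\infty$ uniformly in $K$, whose pathwise version (ii) follows by combining monotone convergence with the almost-sure finiteness of $U_\infty$.

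The main obstacle, in my view, is conclusion (ii): pathwise summability of $\alpha_k v_k$ is not obtainable by a naive iteration on expectations and genuinely requires the supermartingale construction above. I note in passing that the hypothesis $\mu_k/\alpha_k\to 0$ is not actually consumed in the almost-sure argument (summability of $\mu_k$ together with $\sum\alpha_k=\infty$ suffices); this extra hypothesis is what one uses when additionally deducing $\EXP{v_k}\to 0$, via an elementary deterministic lemma applied to the unconditional recursion $\EXP{v_{k+1}}\leq (1-\alpha_k)\EXP{v_k}+\mu_k$.
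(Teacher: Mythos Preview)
Your argument is correct: the Robbins--Siegmund reduction (and the self-contained supermartingale variant via $U_k=v_k+\sum_{j\ge k}\mu_j$) is the standard route, and your deduction of $v_\infty=0$ from $\sum_k\alpha_k v_k<\infty$ together with $\sum_k\alpha_k=\infty$ is clean. The one loose phrase is the justification of the pathwise bound~(ii): the cleanest way is simply monotone convergence applied to $\sum_{k\le K}\alpha_k v_k$, giving $\EXP{\sum_k\alpha_k v_k}\le\EXP{U_0}<\infty$ and hence almost-sure finiteness; the ``almost-sure finiteness of $U_\infty$'' you mention is not actually needed for that step.

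Note, however, that the paper does not give any proof of this lemma. It is stated with attribution to Polyak's textbook (Lemma~10, p.~49) and invoked as a black box in the subsequent convergence theorem, so there is no ``paper's own proof'' to compare against. Your observation that the hypothesis $\mu_k/\alpha_k\to 0$ is not consumed by the almost-sure argument (given $\sum_k\mu_k<\infty$ and $\sum_k\alpha_k=\infty$) is accurate and worth recording.
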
 
\begin{theorem}[Optimal rate of convergence for $x_k$]
Consider algorithm (\ref{algorithm:RSEG}). Let Assumption \ref{assum:step_error_sub_1} and \ref{assump:stochastic-error} hold, % and suppose $\e_k=\beta\g_k$  for any $k\geq 0$, where $ \beta \geq {\sqrt{5n}C}$ 
 let mapping $F$ be strictly monotone over $X$, and assume that problem (\ref{def:SVI})  has the weak-sharpness property with parameter $\alpha>0$. Suppose the sequences $\g_t$ and $\e_t$ are given by the recursive relations (\ref{equ:adaptive-rules}). Then, problem (\ref{def:SVI}) has a unique solution, $x^*$, and the following results hold:
\begin{itemize}
\item [(a)]
The sequence $\{x_t\}$ generated by the algorithm (\ref{algorithm:RSEG}) converges to the solution of problem (\ref{def:SVI}) almost surely as $k\rightarrow \infty$.
\item [(b)] The sequence $\{x_t\}$ generated by the algorithm (\ref{algorithm:RSEG}) converges to the solution of problem (\ref{def:SVI}) in a mean-squared sense. More precisely, we have 
\[\EXP{\|x_{t}-x^*\|^2}\leq \left(\frac{4qM^2}{\alpha^2}\right)\frac{1}{t},\quad \hbox{for all } \fy{t\geq 1}.\]
%where $q\triangleq \alpha+2C^2\alpha+2\alpha \nu^2+2(\alpha+1)\beta\sqrt{n}C+10(C^2+\nu^2).$
\end{itemize}
\end{theorem}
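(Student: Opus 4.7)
The plan is to combine Lemma~\ref{lemma:bound-sharpness} with the structure of the recursive stepsizes. First I would verify that the recursions in~\eqref{equ:adaptive-rules} maintain the identity $\e_t=\beta\g_t$ for all $t\ge 0$: $\e_0=\beta\g_0$ by construction, and if $\e_t=\beta\g_t$ then $\e_{t+1}=\e_t(1-\tfrac{\alpha}{2\beta M}\e_t)=\beta\g_t(1-\tfrac{\alpha}{2M}\g_t)=\beta\g_{t+1}$. Since $\beta>\sqrt{5n}C$, this yields $\g_t<\e_t/(\sqrt{5n}C)$, so the hypothesis of Lemma~\ref{lemma:bound-sharpness} holds. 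Substituting $\e_t=\beta\g_t$ into the conclusion of that lemma and collecting terms gives, with the paper's constant $q$,
\begin{align*}
\EXP{\|x_{t+1}-x^*\|^2\mid \sF_{t-1}\cup \sF_t'}
\leq \left(1-\tfrac{\alpha\g_t}{M}\right)\|x_t-x^*\|^2 + q\g_t^2.
\end{align*}
I would also derive the a priori bound $\g_t\le \tfrac{2\alpha M}{q+t\alpha^2}$ by setting $z_t=1/\g_t$ and noting that $z_{t+1}=z_t/(1-\tfrac{\alpha}{2Mz_t})\ge z_t+\tfrac{\alpha}{2M}$, so $z_t\ge z_0+\tfrac{t\alpha}{2M}=\tfrac{q+t\alpha^2}{2\alpha M}$.

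For part (a), I would invoke Lemma~\ref{lemma:probabilistic_bound_polyak} with $v_t=\|x_t-x^*\|^2$, $\alpha_t=\alpha\g_t/M$, and $\mu_t=q\g_t^2$. Nonnegativity and integrability of $v_0$ are immediate from boundedness of $X$. The stepsize bound above shows $\alpha_t \sim 2/t$ for large $t$, hence $\sum_t\alpha_t=\infty$, while $\mu_t \sim O(1/t^2)$ gives $\sum_t\mu_t<\infty$, and $\mu_t/\alpha_t=qM\g_t/\alpha\to 0$ since $\g_t\to 0$. The condition $\alpha_t\le 1$ holds for all sufficiently large $t$, which is enough for the lemma to apply after discarding the finitely many leading terms. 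Thus $\|x_t-x^*\|^2\to 0$ almost surely.

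For part (b), take full expectations to obtain the deterministic recursion $u_{t+1}\le(1-\tfrac{\alpha\g_t}{M})u_t+q\g_t^2$ with $u_t\triangleq \EXP{\|x_t-x^*\|^2}$, and prove by induction that $u_t\le c\,\g_t$ with $c\triangleq 2qM/\alpha$. The base case $u_0\le 4M^2$ holds by boundedness, and $c\,\g_0=\tfrac{2qM}{\alpha}\cdot\tfrac{2\alpha M}{q}=4M^2$ matches exactly, which is why the choice $\g_0^*=2\alpha M/q$ is made. For the step, assuming $u_t\le c\g_t$,
\begin{align*}
u_{t+1}
&\le \left(1-\tfrac{\alpha\g_t}{M}\right)c\,\g_t + q\g_t^2
= c\,\g_t - \left(\tfrac{\alpha c}{M}-q\right)\g_t^2 \\
&= c\,\g_t - q\g_t^2
= c\,\g_t\left(1-\tfrac{\alpha}{2M}\g_t\right)
= c\,\g_{t+1},
\end{align*}
where the second equality uses $\alpha c/M=2q$ and the third uses $q/c=\alpha/(2M)$. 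Combining with $\g_t\le 2M/(\alpha t)$ for $t\ge 1$ yields $u_t\le c\g_t\le \tfrac{4qM^2}{\alpha^2}\cdot\tfrac{1}{t}$.

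The main obstacle is choosing the constants so the induction closes exactly: the threshold $c=2qM/\alpha$ is the smallest value for which the induction step goes through, and the initial value $\g_0=2\alpha M/q$ is calibrated precisely so that $c\g_0=4M^2$ matches the trivial upper bound $\|x_0-x^*\|^2\le(2M)^2$. This interlocking choice is what allows the induction to start from $t=0$ and propagate the $\g_t$-bound cleanly; verifying it requires the identity $q/c=\alpha/(2M)$, which turns the induction step into a perfect reproduction of the stepsize recursion~\eqref{equ:adaptive-rules}.
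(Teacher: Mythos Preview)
Your proposal is correct and follows essentially the same path as the paper: establish $\e_t=\beta\g_t$ by induction, feed this into Lemma~\ref{lemma:bound-sharpness} to obtain the one-step recursion with remainder $q\g_t^2$, then for (a) apply Lemma~\ref{lemma:probabilistic_bound_polyak}, and for (b) run the induction $u_t\le (2qM/\alpha)\g_t$ together with the bound $\g_t\le 2M/(\alpha t)$. Your derivation of the latter via $z_t=1/\g_t$ and the elementary inequality $1/(1-x)\ge 1+x$ is a slightly cleaner variant of the paper's AM--HM argument, and your remark that $\alpha_t\le 1$ need only hold for large $t$ is a careful touch the paper glosses over; one small point to tighten is that ``$\alpha_t\sim 2/t$'' requires a matching \emph{lower} bound on $\g_t$ to conclude $\sum_t\alpha_t=\infty$, which follows by the symmetric estimate $z_{t+1}\le z_t+\tfrac{\alpha/(2M)}{1-\alpha\g_0/(2M)}$.
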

\begin{proof} 
(a) First we show that $\e_t^*=\beta\g_t^*$ for any $t \geq 0$. From (\ref{equ:adaptive-rules}), we have $ \e_0^*= \frac{2\alpha \beta M}{q}=\beta \g_0^*$, implying that the relation holds for $t=0$. Assume that the relation holds for some fixed $t$. We show that it holds for $t+1$. We have 
\begin{align*}\e_{t+1}^*&= \e_t^*\left(1-\frac{\alpha}{2\beta M}\e_t^*\right)=\beta \g_t^*\left(1-\frac{\alpha}{2\beta M}\beta\g_t^*\right)\cr &=\beta \g_{t+1}^*\end{align*}
Therefore, we conclude that $\e_t^*=\beta\g_t^*$ for any $t \geq 0$. Since we assumed $\beta> \sqrt{5n}C$, we get $\e_t^>\sqrt{5n}C\g_t^*$ or equivalently, $\g_t < \frac{\e_t}{ \sqrt{5n}C}$ for any $t \geq 0$. Next, we show that $\{\g_t^*\}$ is a decreasing sequence with strictly positive elements. We have \begin{align*}
&\frac{\alpha^2-5(C^2+\nu^2)}{2\sqrt{n}C} <\beta \Rightarrow \alpha^2 < 5(C^2+\nu^2)+2\sqrt{n}C\beta \cr
& \Rightarrow \alpha^2 < \alpha(1+2C^2+2\nu^2+2\sqrt{n}C\beta)+5(C^2+\nu^2)\cr &+2\sqrt{n}C\beta =q
 \Rightarrow \frac{\alpha}{q} < \frac{1}{\alpha} %\Rightarrow \frac{2\alpha M}{q} < \frac{2M}{\alpha}\cr &
  \Rightarrow  \g_0^* < \frac{2M}{\alpha}.
\end{align*}
Using the preceding relation we obtain $\g^*_1=\g^*_0(1-\frac{\alpha}{2M}\g^*_0)<\g_0^*<  \frac{2M}{\alpha}$ and $\g_1^*>0$. Following the same approach, induction implies that 
\begin{align}\label{ineq:rec-dec-pos}0<\ldots<\g_2^*<\g_1^*<\g_0^*<\frac{2M}{\alpha},\end{align} 
\fy{verifying that $\{\g_t^*\}$ is a decreasing sequence with strictly positive terms. Therefore, all the conditions of Lemma \ref{lemma:bound-sharpness} hold showing that for the unique solution $x^*$, %and $q_1\triangleq \alpha(1+2(C^2+ \nu^2))+5(C^2+\nu^2)$ and $q_2\triangleq 2(1+\alpha)\beta\sqrt{n}C$, 
we have}
 \begin{align*}
&\EXP{\|x_{t+1}-x^*\|^2\mid  \sF_{t-1}\cup \sF'_t} \cr &\leq \left(1-\frac{\alpha \g_t}{M}\right)\|x_t-x^*\|^2+q_1\e_t\g_t+q_2\g_t^2 \cr 
& \leq  \left(1-\frac{\alpha \g_t}{M}\right)\|x_t-x^*\|^2+\beta q_1\g_t^2+q_2\g_t^2 ,
\end{align*}
where we used the relation $\e_t^*=\beta\g_t^*$. From definition of $q_1$, $q_2$ and $q$, we have $q = \beta q_1+q_2$. Thus, from the preceding relation we obtain
 \begin{align*}%\label{ineq:rec-bound-gamma}
\EXP{\|x_{t+1}-x^*\|^2 \mid\sF_{t-1}\cup \sF'_t} &\leq  \left(1-\frac{\alpha \g_t}{M}\right)\|x_t-x^*\|^2\cr &+q\g_t^2, \quad \hbox{for all } t \geq 0.
\end{align*}
\fy{The next step is to show $\sum_{t=0}^\infty{\g_t^*}=\infty$ and $\sum_{t=0}^\infty{(\g_t^*)^2}<\infty.$ The proof of these relations can be found in our prior work (cf. Prop. 3 in \cite{Farzad1})}. 
The last step of the proof is applying Lemma \ref{lemma:probabilistic_bound_polyak} on the preceding inequality. We verified that all the conditions of Lemma \ref{lemma:probabilistic_bound_polyak} are satisfied for $v_t\triangleq \|x_t-x^*\|^2$, $\alpha_t\triangleq \frac{\alpha}{2M}\g_t^*$, $\mu_t\triangleq q(\g_t^*)^2$. Therefore, $x_t$ converges to $x^*$ almost surely.\\
(b) In the first part of the proof, using induction on $t$, we show that using the sequences $\g_t^*$ and $\e_t^*$, we have \begin{align}\label{ineq:bound-gamma^*_1}
\EXP{\|x_{t}-x^*\|^2}\leq \frac{2Mq}{\alpha}\g_t^*, \quad \hbox{for all } t\geq 0.\end{align} For $t=0$, the relation becomes $\EXP{\|x_{0}-x^*\|^2}\leq \frac{2Mq}{\alpha}\g_0^*=4M^2$. This holds because $\EXP{\|x_{0}-x^*\|^2} \leq \EXP{2\|x_{0}\|^2+2\|x^*\|^2}\leq 4M^2.$ Let us assume that relation (\ref{ineq:bound-gamma^*_1}) holds for $t$. Then, taking expectation from (\ref{ineq:bound-sharpness}) and the definition of $q$ we can write   
 \begin{align*} 
\EXP{\|x_{t+1}-x^*\|^2 }&\leq  \left(1-\frac{\alpha \g_t^*}{M}\right)\EXP{\|x_t-x^*\|^2}\cr &+q(\g_t^*)^2, \quad \hbox{for all } t \geq 0.
\end{align*}
Using the induction hypothesis, from the preceding relation it follows
 \begin{align*}
&\quad \EXP{\|x_{t+1}-x^*\|^2 }\leq  \left(1-\frac{\alpha \g_t^*}{M}\right)\frac{2Mq}{\alpha}\g_t^*+q(\g_t^*)^2\cr &=\frac{2Mq}{\alpha}\g_t^*\left(1-\frac{\alpha \g_t^*}{M} +\frac{\alpha \g_t^*}{2M}\right)=\frac{2Mq}{\alpha}\g_{t+1}^*.
\end{align*}
%where in the last relation we used $\g_{t+1}^*=\g_t^*\left(1-\frac{\alpha \g_t^*}{2M}\right)$. 
Therefore, the relation (\ref{ineq:bound-gamma^*_1}) holds for $t+1$, implying that it holds for any $t\geq 0$. In the second part of the proof, we show that 
\begin{align}\label{ineq:bound-gamma^*_2}
\g_t^*\leq \frac{2M}{\alpha t}, \quad \hbox{for all } t\geq 0.
\end{align}
From  definition of the sequence $\{\g_t^*\}$ we have for $t \geq 0$, $$\frac{1}{\g_{t+1}^*}=\frac{1}{\g_t^*(1-\frac{\alpha\g_t^*}{2M})}=\frac{1}{\g_t^*}+\frac{\frac{\alpha}{2M}}{1-\frac{\alpha\g_t^*}{2M}} .$$
Summing up from $t=0$ to $k$ we obtain
\begin{align}\label{ineq:rate_analysis_1}
 \frac{1}{\g_{k+1}^*}=\frac{1}{\g_0^*}+\frac{\alpha}{2M}\sum_{t=0}^k\frac{1}{1-\frac{\alpha\g_t^*}{2M}}>\frac{\alpha}{2M}\sum_{t=0}^k\frac{1}{1-\frac{\alpha\g_t^*}{2M}}. \end{align}
From the \fy{Cauchy-Schwarz} inequality, 
$ \frac{1}{\frac{1}{n}\sum_{k=1}^{n} \frac{1}{a_k}}\leq \frac{1}{n}\sum_{k=1}^{n} a_k$ holds for arbitrary positive numbers $a_1,a_2, \ldots,a_n$. Thus, for the terms $1-\frac{\alpha\g_t^*}{2M}$ we get
\begin{align*}
&\quad \left(\frac{1}{k+1}\sum_{t=0}^{k} \frac{1}{1-\frac{\alpha\g_t^*}{2M}}\right)^{-1}\leq \frac{1}{k+1}\sum_{t=0}^k (1-\frac{\alpha\g_t^*}{2M}) \cr &< \frac{\sum_{t=0}^{k} 1}{k+1} = 1 \Rightarrow \sum_{t=0}^{k} \frac{1}{1-\frac{\alpha\g_t^*}{2M}}>k+1.
\end{align*}
The preceding relation and (\ref{ineq:rate_analysis_1}) imply that the inequality (\ref{ineq:bound-gamma^*_2}) holds. In conclusion, using the two relations (\ref{ineq:bound-gamma^*_1}) and (\ref{ineq:bound-gamma^*_2}), we obtain the desired result.
\end{proof}

\section{Concluding Remarks}\label{sec:conc}
\fy{We presented two robust variants of a stochastic extragradient method for solving monotone stochastic VIs by utilizing a smoothing technique. First, using a new class of choices for the weights in an averaging  scheme, we show that a suitably defined gap function converges to zero at rate of ${\cal O}\left(\frac{1}{\sqrt{k}}\right)$. Second, we develop a recursive rule for updating stepsize and smoothing parameters and we show both the almost-sure convergence and that the rate in mean-squared sense is optimal. Importantly, this scheme allows for tuning the steplength sequence to problem parameters.} 
\bibliographystyle{IEEEtran}
\bibliography{wsc11-v02,demobib}

\end{document}